\documentclass[12pt,leqno]{amsart}
\usepackage{amsmath}
\usepackage{amssymb}
\def\overset#1#2{{\mathrel{\mathop {{#2}_{}}\limits^{#1}}}}
\def\underset#1#2{{\mathrel{\mathop {{}_{} {#2}}\limits_{{#1}_{}}}}}
\def\upplim_#1{\underset{#1}{\overline\lim}\;}
\def\lowlim_#1{\underset{#1}{\underline\lim}\;}
\setlength{\textwidth}{160true mm}
\setlength{\textheight}{225true mm}
\setlength{\topmargin}{0true mm}
\setlength{\oddsidemargin}{3true mm}
\setlength{\evensidemargin}{3true mm}
\parindent=10pt
\parskip3pt

\newtheorem{corollary}[equation]{Corollary}
\newtheorem{definition}[equation]{\indent{\it Definition}\rm }
\newtheorem{claim}[equation]{\indent{\it Claim}\rm }

\newtheorem{lemma}[equation]{Lemma}
\newtheorem{proposition}[equation]{Proposition}

\newtheorem{theorem}[equation]{Theorem}

\newcommand{\B}{\mathbb{B}}
\newcommand{\C}{{\mathbb{C}}}

\renewcommand{\P}{{\mathbb{P}}}

\newcommand{\R}{{\mathbf{R}}}
\newcommand{\ddc}{\mathrm{dd}^c}
\newcommand{\rank}{\mathrm{rank}}

\newcommand{\ric}{\mathrm{Ric}}

\newcommand{\Z}{\mathbf{Z}}

\numberwithin{equation}{section}

\title[Non-integrated defect relation meromorphic maps]{Non-integrated defect relation for meromorphic maps from a K\"{a}hler manifold intersecting hypersurfaces in subgeneral of $\P^n(\C)$} 

\author{Si Duc Quang}
\author{Nguyen Thi Quynh Phuong}
\author{Nguyen Thi Nhung}

\begin{document}

\begin{abstract}
In this article, we establish a truncated non-integrated defect relation for meromorphic mappings from an $m$-dimensional complete K\"{a}hler manifold into $\P^n(\C)$  intersecting $q$ hypersurfaces $Q_1,...,Q_q$ in $k$-subgeneral position of degree $d_i$, i.e., the intersection of any $k+1$ hypersurfaces is emptyset. We will prove that
$$ \sum_{i=1}^q\delta_f^{[u-1]}(Q_i)\le (k-n+1)(n+1)+\epsilon+\frac{\rho u(u-1)}{d},  $$
where $u$ is explicitly estimated and $d$ is the least common multiple of $d_i'$s. Our result generalizes and improves previous results. In the last part of this paper we will apply this result to study the distribution of the Gauss map of minimal surfaces.
\end{abstract}

\maketitle

\def\thefootnote{\empty}
\footnotetext{
2010 Mathematics Subject Classification:
Primary 32H30, 32A22; Secondary 30D35.\\
\hskip8pt Key words and phrases: Nevanlinna, second main theorem, meromorphic mapping, non-integrated defect relation.}

\section{Introduction and Main result} 

Let $M$ be a complete K\"{a}hler manifold of dimension $m$. Let $f: M\longrightarrow \P^n(\C)$ be a meromorphic mapping and $\Omega_f$ be the pull-back of the Fubini-Study form $\Omega$ on $\P^n(\C)$ by $f$. For a positive integer  $\mu_0$ and a hypersurface $D$ of degree $d$ in $\P^n(\C)$ with $f(M)\not\subset D$, we denote by $\nu_f(D)(p)$ the intersection multiplicity of the image of $f$ and $D$ at $f(p)$. 

In 1985, H. Fujimoto \cite{F85} defined the notion of the non-integrated defect of $f$ with respect to $D$ truncated to level $\mu_0$ by
$$\delta_f^{[\mu_0]}:= 1- \inf\{\eta\ge 0: \eta \text{ satisfies condition }(*)\}.$$
Here, the condition (*) means that there exists a bounded non-negative continuous function $h$ on $M$ whose order of each zero is not less than $\min \{\nu_f(D), \mu_0\}$ such that
$$d\eta\Omega_f +\dfrac{\sqrt{-1}}{2\pi}\partial\bar\partial\log h^2\ge [\min\{\nu_f(D), \mu_0\}].$$
And then he gave a result analogous to the defect relation in Nevanlinna theory as follows. 

\vskip0.2cm
\noindent
{\bf Theorem  A} (see \cite[Theorem 1.1]{F85}). \ {\it Let $M$ be an $m$-dimensional complete K\"ahler manifold  and $\omega$ be a K\"ahler form of $M.$  Assume that the universal covering of $M$ is biholomorphic to a ball in $\mathbb C^m.$ Let $f : M\to \P^n(\C)$ be a meromorphic map which is linearly nondegenerate (i.e., its image is not contained in any hyperplane of $\P^n(\C)$). Let $H_1,\cdots,H_q$ be hyperplanes of $\P^n(\C)$ in general position. For some $\rho\ge 0$, if there exists a bounded continuous function $h\ge 0$ on $M$ such that 
$$\rho \Omega_f +\ddc \log h^2\ge \ric \ \omega,$$
then 
$$\sum_{i=1}^{q} \delta_{f}^{[n]} (H_i) \le n+1+ \rho n(n+1).$$}

Recently,  M. Ru-S. Sogome \cite{RS} generalized Theorem A to the case of meromorphic mappings intersecting a family of hypersurfaces in general position. After that, Q. Yan \cite{Y} extended Theorem A by consider the case where the family of hypersurfaces in subgeneral position. He proved the following. 

\vskip0.2cm
\noindent
{\bf Theorem  B} (see \cite[Theorem 1.1]{Y}). \ {\it Let $M$ be an $m$-dimensional complete K\"ahler manifold  and $\omega$ be a K\"ahler form of $M.$  Assume that the universal covering of $M$ is biholomorphic to a ball in $\mathbb C^m.$
Let $f$ be an algebraically nondegenerate meromorphic map of $M$ into $\P^n(\C)$. 
Let $Q_1,...,Q_q$ be hypersurfaces in $\P^n(\C)$ of degree $P_{Ij},$ in $k$-subgeneral position in $\P^n(\C).$  
Let $d = l.c.m.\{Q_1,...,Q_q\}$ (the least common multiple of $\{Q_1,...,Q_q\}$).
Denote by $\Omega_f$ the pull-back of the Fubini-Study form of  $\P^n(\C)$ by $f.$
Assume that for some $\rho \ge 0,$ there exists a bounded continuous function $h \geq 0$ on $M$ such that
$$\rho\Omega_f + \ddc \log h^2 \geq \ric \ \omega.$$
Then, for each  $\epsilon>0,$ we have
$$\sum_{j=1}^q {\delta}_{f}^{[u-1]} (Q_j ) \le k(n+1)+ \epsilon +\dfrac{\rho u(u-1)}{d},$$
where $u= \bigl ( ^{N +n}_{\ \ n}\bigl )\leq (3ekd I(\epsilon^{-1}))^n(n+1)^{3n}$ and
$N=2kdn^2(n+1)^2 I(\epsilon^{-1}).$}

Here, for a real number $x$, we define $I(x):=\min\{a\in\mathbf Z\ ;\ a>x\}$.
 
However, the above result of Q. Yan does not yet completely extend the results of H. Fujimoto and M. Ru-S. Sogome. Indeed, when the family of hypersurfaces in general position, i.e., $k=n$, the first term in the right hand side of the defect relation inequality is $n(n+1)$, which is bigger than $(n+1)$ as usual. Recently, T. V. Tan and V. V. Truong in \cite{TT} also gave a non-integrated defect relation for the family of hypersurfaces in subgeneral position, where this term is equal to $n+1$. But their definition of ``subgeneral position'' is quite special, which has an extra condition on the intersection of these $q$ hypersurfaces (see Definition 1.1(ii) in \cite{TT})

The first aim of this paper is to establish a non-integrated defect relation for meromorphic mappings of complete K\"{a}hler manifolds into $\P^n(\C)$ sharing hypersurfaces located in subgeneral position which generalizes the above mentioned results and improves the result of Q. Yan. In usual principle, to treat with the case of family of hypersurfaces in subgeneral position, we need to generalize the notion of Nochka weights. However for the case of hypersurfaces, there is no Nochka weights constructed. In order to over come this difficult, we will use a technique ``replacing hypersurfaces'' proposed in \cite{Q16-1,Q16-2}. Before stating our result, we recall the following.

Let $k\geq n$ and $q\geq k+1.$ Let $Q_1,...,Q_q$ be hypersurfaces in $\mathbb P^n(\mathbb C).$ The hypersurfaces $Q_1,...,Q_q$ are said to be in $k$-subgeneral position in $\mathbb P^n(\mathbb C)$ if 
$$Q_{j_1}\cap\cdots\cap Q_{j_{k+1}}=\emptyset\text{ for every }1\leq j_1<\cdots<j_{k+1}\leq q.$$
If  $\{Q_i\}_{i=1}^q$ is in $n$-subgeneral position then we say that it is in \textit{general position}.

Our main result is stated as follows.
 
\begin{theorem}\label{1.1} 
Let $M$ be an $m$-dimensional complete K\"ahler manifold  and $\omega$ be a K\"ahler form of $M.$  Assume that the universal covering of $M$ is biholomorphic to a ball in $\mathbb C^m.$ Let $f$ be an algebraically nondegenerate meromorphic map of $M$ into $\P^n(\C)$. Let $Q_1,...,Q_q$ be hypersurfaces in $\P^n(\C)$ of degree $d_j,$ in $k$-subgeneral position in $\P^n(\C).$  Let $d = l.c.m.\{d_1,...,d_q\}$ (the least common multiple of $\{d_1,...,d_q\}$). Assume that for some $\rho \ge 0,$ there exists a bounded continuous function $h \geq 0$ on $M$ such that
$$\rho\Omega_f + \ddc \log h^2 \geq \ric \ \omega.$$
Then, for each  $\epsilon>0,$ we have
$$\sum_{j=1}^q {\delta}^{[u-1]}_{f} (Q_j) \le p(n+1)+ \epsilon +\dfrac{\rho u(u-1)}{d},$$
where $p=k-n+1,\ u= \bigl ( ^{N +n}_{\ \ n}\bigl )\leq e^{n+2}(dp(n+1)^2 I(\epsilon^{-1}))^n$ and
$N=(n+1)d+p(n+1)^3I(\epsilon^{-1})d.$
\end{theorem}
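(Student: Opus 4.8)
The plan is to reduce the degree-$d$ hypersurface problem to a hyperplane problem via a Veronese-type embedding, to treat the $k$-subgeneral position by the ``replacing hypersurfaces'' technique of \cite{Q16-1,Q16-2} in order to secure the sharp factor $p=k-n+1$ rather than $k$, and then to run the Fujimoto-type curvature argument underlying Theorem A in the larger projective space.

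First I would pass to the universal covering, so that the curvature hypothesis may be exploited on a ball in $\C^m$. Fix an integer $N$ divisible by $d$ (to be optimized at the end) and let $\Phi_N\colon\P^n(\C)\to\P^{u-1}(\C)$ be the map given by a basis of the homogeneous monomials of degree $N$, where $u=\binom{N+n}{n}$; set $F=\Phi_N\circ f$. Since $f$ is \emph{algebraically} nondegenerate, no nonzero degree-$N$ form vanishes on its image, so the degree-$N$ monomials composed with $f$ are linearly independent and $F$ is \emph{linearly} nondegenerate. Because $\Phi_N^{*}\Omega=N\,\Omega$, one has $\Omega_F=N\,\Omega_f$, so the hypothesis $\rho\,\Omega_f+\ddc\log h^2\ge\ric\,\omega$ passes to $F$ with its parameter rescaled by $N$.

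The decisive step is to feed the $k$-subgeneral family into the hyperplane theory with the correct constant. At a generic point of $M$, the defining condition $Q_{j_1}\cap\cdots\cap Q_{j_{k+1}}=\emptyset$ forces at most $k$ of the quantities $|Q_j(f)|$ to be simultaneously small. Using the replacing hypersurfaces construction, I would produce from $Q_1,\dots,Q_q$ new hypersurfaces in \emph{general} position relative to $f$, together with a pointwise comparison bounding $\sum_j\log\frac{\|f\|^{d}}{|Q_j(f)|}$ by the corresponding sum over a linearly nondegenerate subfamily, the subgeneral excess costing only the factor $p=k-n+1$ in place of $k$. Passing the general-position family through a filtration of the space of degree-$N$ forms adapted to it yields hyperplanes of $\P^{u-1}(\C)$ in general position with respect to $F$, turning the comparison into proximity and counting data for $F$.

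I would then apply the non-integrated defect machinery behind Theorem A to $F$ and these hyperplanes. The left side reassembles into $\sum_j\delta_f^{[u-1]}(Q_j)$; the constant term, which for $\P^{u-1}(\C)$ in general position and after the filtration (Chebyshev) estimate contributes $(n+1)+\tfrac{\epsilon}{2}$, is multiplied by the subgeneral factor $p$ from the replacing step to give $p(n+1)+\epsilon$; and the bounded function $h$ together with $\ric\,\omega$ supplies the curvature term, which after accounting for the scaling $\Omega_F=N\,\Omega_f$ and the degree normalization built into the defect takes the form $\rho u(u-1)/d$. Choosing $N=(n+1)d+p(n+1)^3I(\epsilon^{-1})d$ renders the residual combinatorial error at most $\epsilon$ and yields the stated bound $u=\binom{N+n}{n}\le e^{n+2}\bigl(dp(n+1)^2I(\epsilon^{-1})\bigr)^n$. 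The hard part is the replacing hypersurfaces step: one must track the weights and multiplicities finely enough that the subgeneral excess contributes exactly $k-n+1$ and not $k$, while keeping the truncation level at $u-1$ and ensuring that both $h$ and the Ricci term survive the embedding and the final optimization of $N$.
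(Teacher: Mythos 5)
Your proposal follows essentially the same route as the paper: reduction to the ball via the universal covering, the replacing-hypersurfaces construction of \cite{Q16-1,Q16-2} to secure the factor $p=k-n+1$, the Corvaja--Zannier filtration of $V_N$ turning the problem into one for the linearly nondegenerate map $F=(\phi_1(\tilde f):\cdots:\phi_u(\tilde f))$ into $\P^{u-1}(\C)$, Fujimoto's plurisubharmonicity argument with the Yau--Karp integrability obstruction (together with the second-main-theorem case when $T_f(r,r_0)/\log\frac{1}{1-r}\to\infty$), and the same choice of $N$. The only inessential slip is attributing the term $\rho u(u-1)/d$ to the rescaling $\Omega_F=N\,\Omega_f$; in the paper it arises from the weights $|\alpha_1|+\cdots+|\alpha_u|\le u(u-1)/2$ of the admissible set defining the generalized Wronskian.
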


Then we see that, if the family of hypersurfaces is in general position, i.e., $k=n$, then our result deduces the results of H. Fujimoto and also of M. Ru-S. Sogome. Of course, compaired to the original form of Cartan-Nochka's theorem where the first term in the right hand side of the defect relation inequality is $(2k-n+1)$, our result is still not yet optimal. Therefore, how to give a sharp defect relation in this case is an open question.

In the above theorem, letting $\epsilon =1+\epsilon'$ with $\epsilon'>0$ and then letting $\epsilon'\longrightarrow 0$, we obtain the following corollary.
\begin{corollary} With the assumption of Theorem \ref{1.1}, we have
$$\sum_{j=1}^q {\delta}^{[u-1]}_{f} (Q_j) \le p(n+1)+ 1+\dfrac{\rho u(u-1)}{d},$$
where $p=k-n+1,\ u= \bigl ( ^{N +n}_{\ \ n}\bigl )\leq e^{n+2}(dp(n+1)^2)^n$ and
$N=(n+1)d(1+p(n+1)^2).$
\end{corollary}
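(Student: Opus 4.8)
The plan is to derive the Corollary purely as a formal specialization of Theorem \ref{1.1}, not to reprove anything from scratch. The Corollary is the limiting case of the theorem obtained by substituting $\epsilon = 1+\epsilon'$ and letting $\epsilon'\to 0^+$, so the entire task is to track how the three quantities in the theorem's conclusion, the constant term $\epsilon$, the combinatorial exponent $N$, and the resulting bound on $u$, behave under this substitution and limit. I would open by invoking Theorem \ref{1.1} verbatim with the parameter $\epsilon$ replaced by $1+\epsilon'$, so that for every $\epsilon'>0$ we have the defect inequality
$$\sum_{j=1}^q {\delta}^{[u-1]}_{f}(Q_j) \le p(n+1)+(1+\epsilon')+\frac{\rho u(u-1)}{d},$$
with $u=\binom{N+n}{n}$ and $N=(n+1)d+p(n+1)^3 I((1+\epsilon')^{-1})d$.

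The key observation driving the limit is the behavior of $I((1+\epsilon')^{-1})$. Since $I(x)=\min\{a\in\mathbf Z: a>x\}$ and $0<(1+\epsilon')^{-1}<1$ for all $\epsilon'>0$, the value $I((1+\epsilon')^{-1})$ equals $1$ for every $\epsilon'>0$. Thus $N$ does not actually depend on $\epsilon'$ once the substitution is made: it collapses immediately to $N=(n+1)d+p(n+1)^3 d=(n+1)d\bigl(1+p(n+1)^2\bigr)$, which is precisely the value of $N$ quoted in the Corollary. Because $N$ is constant in $\epsilon'$, so is $u=\binom{N+n}{n}$, and therefore the only term in the inequality that moves as $\epsilon'\to 0^+$ is the additive constant $1+\epsilon'$. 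Passing to the limit (the left side and the final term are independent of $\epsilon'$) yields
$$\sum_{j=1}^q {\delta}^{[u-1]}_{f}(Q_j) \le p(n+1)+1+\frac{\rho u(u-1)}{d},$$
which is the asserted defect relation with the improved constant $+1$ in place of $+\epsilon$.

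It remains only to confirm the closed-form upper bound $u\le e^{n+2}(dp(n+1)^2)^n$. This follows by substituting $I(\epsilon^{-1})=I((1+\epsilon')^{-1})=1$ into the theorem's general estimate $u\le e^{n+2}\bigl(dp(n+1)^2 I(\epsilon^{-1})\bigr)^n$; with the factor $I(\epsilon^{-1})$ equal to $1$ this is exactly $u\le e^{n+2}(dp(n+1)^2)^n$. The main (indeed only) point requiring care is the evaluation $I((1+\epsilon')^{-1})=1$ for all $\epsilon'>0$, which hinges on the strict inequality defining $I$ together with the fact that $(1+\epsilon')^{-1}$ lies strictly in the open interval $(0,1)$; this is what makes $N$, $u$, and the $u$-bound all independent of $\epsilon'$ and lets the limit act solely on the additive constant. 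No genuine analytic obstacle arises, since all the hard work, the replacing-hypersurfaces construction and the second-main-theorem estimates, is already packaged inside Theorem \ref{1.1}.
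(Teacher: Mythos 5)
Your proposal is correct and follows exactly the paper's own route: the paper derives the corollary by setting $\epsilon=1+\epsilon'$ in Theorem \ref{1.1} and letting $\epsilon'\to 0$, which is precisely your argument. Your only addition is making explicit the key evaluation $I((1+\epsilon')^{-1})=1$ for all $\epsilon'>0$, so that $N$, $u$, and the truncation level are independent of $\epsilon'$ and the limit acts only on the additive constant — a detail the paper leaves implicit but which your write-up verifies correctly.
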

In the last part of this paper, we will apply Theorem \ref{1.1} to give a non-integrated defect relation of the Gauss map of a regular submanifold of $\C^m$ (see Theorem \ref{6.1.} below).

\section{Basic notions and auxiliary results from Nevanlinna theory}

\noindent
{\bf 2.1. Counting function.}\ We set $||z|| = \big(|z_1|^2 + \dots + |z_m|^2\big)^{1/2}$ for
$z = (z_1,\dots,z_m) \in \mathbb C^m$ and define
\begin{align*}
\B^m(r) &:= \{ z \in \mathbb C^m : ||z|| < r\},\\
S(r) &:= \{ z \in \mathbb C^m : ||z|| = r\}\ (0<r<\infty).
\end{align*}
Define 
$$v_{m-1}(z) := \big(dd^c ||z||^2\big)^{m-1}\quad \quad \text{and}$$
$$\sigma_m(z):= d^c \text{log}||z||^2 \land \big(dd^c \text{log}||z||^2\big)^{m-1}
 \text{on} \quad \mathbb C^m \setminus \{0\}.$$

 For a divisor $\nu$ on  a ball $\B^m(R)$ of $\mathbb C^m$, with $R>0$, and for a positive integer $M$ or $M= \infty$, we define the counting function of $\nu$ by
\begin{align*}
&\nu^{[M]}(z)=\min\ \{M,\nu(z)\},\\
&n(t) =
\begin{cases}
\int\limits_{|\nu|\,\cap \B^m(t)}
\nu(z) v_{m-1} & \text  { if } m \geq 2,\\
\sum\limits_{|z|\leq t} \nu (z) & \text { if }  m=1. 
\end{cases}
\end{align*}
Similarly, we define \quad $n^{[M]}(t).$

Define
$$ N(r,r_0,\nu)=\int\limits_{r_0}^r \dfrac {n(t)}{t^{2m-1}}dt \quad (0<r_0<r<R).$$

Similarly, define  \ $N(r,r_0,\nu^{[M]})$ and denote it by \ $N^{[M]}(r,r_0,\nu)$.

Let $\varphi : \mathbb C^m \longrightarrow \B^m(r) $ be a meromorphic function. Denote by $\nu_\varphi$ the zero divisor of $\varphi$. Define
$$N_{\varphi}(r,r_0)=N(r,r_0,\nu_{\varphi}), \ N_{\varphi}^{[M]}(r,r_0)=N^{[M]}(r,r_0,\nu_{\varphi}).$$

For brevity, we will omit the character $^{[M]}$ if $M=\infty$.

\vskip0.2cm
\noindent
{\bf 2.2. Characteristic function and first main theorem.}\ Let $f : \mathbb \B^m(R) \longrightarrow \mathbb P^n(\mathbb C)$ be a meromorphic mapping.
For arbitrarily fixed homogeneous coordinates $(w_0 : \dots : w_n)$ on $\mathbb P^n(\mathbb C)$, we take a reduced representation $\tilde f = (f_0 , \ldots , f_n)$, which means that each $f_i$ is a holomorphic function on $\B^m(R)$ and  $f(z) = \big(f_0(z) : \dots : f_n(z)\big)$ outside the analytic subset $\{ f_0 = \dots = f_n= 0\}$ of codimension $\geq 2$. Set $\Vert \tilde f \Vert = \big(|f_0|^2 + \dots + |f_n|^2\big)^{1/2}$.

The characteristic function of $f$ is defined by 
$$ T_f(r,r_0)=\int_{r_0}^r\dfrac{dt}{t^{2m-1}}\int\limits_{\B^m(t)}f^*\Omega\wedge v^{m-1}, \ (0<r_0<r<R). $$
By Jensen's formula, we will have
\begin{align*}
T_f(r,r_0)= \int\limits_{S(r)} \log\Vert f \Vert \sigma_m -
\int\limits_{S(r_0)}\log\Vert \tilde f\Vert \sigma_m +O(1), \text{ (as $r\rightarrow R$)}.
\end{align*}

Let $Q$ be a hypersurface in $\P^n(\C)$ of degree $d$. Throughout this paper, we sometimes identify a hypersurface with the defining polynomial if there is no confusion.  Then we may write
$$ Q(\omega)=\sum_{I\in\mathcal T_d}a_I\omega^I, $$
where $\mathcal T_d=\{(i_0,...,i_n)\in\mathbb Z_+^{n+1}\ ;\ i_0+\cdots +i_n=d\}$, $\omega =(\omega_0,...,\omega_n)$, $\omega^I=\omega_0^{i_0}...\omega_n^{i_n}$ with $I=(i_0,...,i_n)\in\mathcal T_d$ and $a_I\ (I\in\mathcal T_d)$ are constants, not all zeros. In the case $d=1$, we call $Q$ a hyperplane of $\P^n(\C)$.

The proximity function of $f$ with respect to $Q$, denoted by $m_f (r,r_0,Q)$, is defined by
$$m_f (r,r_0,Q)=\int_{S(r)}\log\frac{||\tilde f||^d}{|Q(\tilde f)|}\sigma_m-\int_{S(r_0)}\log\frac{||\tilde f||^d}{|Q(\tilde f)|}\sigma_m,$$
where $Q(\tilde f)=Q(f_0,...,f_n)$. This definition is independent of the choice of the reduced representation of $f$. 

We denote by $f^*Q$ the pullback of the divisor $Q$ by $f$. We may see that $f^*Q$ identifies with the zero divisor $\nu^0_{Q(\tilde f)}$ of the function $Q(\tilde f)$. By Jensen's formula, we have
$$N(r,r_0,f^*Q)=N_{Q(\tilde f)}(r,r_0)=\int_{S(r)}\log |Q(\tilde f)|\sigma_m-\int_{S(r_0)}\log |Q(\tilde f)| \sigma_m.$$
Then the first main theorem in Nevanlinna theory for meromorphic mappings and hypersurfaces is stated as follows.

\begin{theorem}[First Main Theorem] Let $f : \mathbb B^m(R) \to \P^n(\C)$ be a holomorphic map, and let $Q$ be a hypersurface in $\P^n(\C)$ of degree $d$. If $f (\mathbb C) \not \subset Q$, then for every real number $r$ with $r_0 < r < R$,
$$dT_f (r,r_0)=m_f (r,r_0,Q) + N(r,r_0,f^*Q)+O(1),$$
where $O(1)$ is a constant independent of $r$.
\end{theorem}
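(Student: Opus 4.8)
The plan is to derive the asserted identity directly from the three boundary-integral expressions already recorded in the excerpt, so that the First Main Theorem reduces to an algebraic cancellation together with Jensen's formula. First I would fix a reduced representation $\tilde f=(f_0,\dots,f_n)$ of $f$ on $\B^m(R)$. The hypothesis $f(\C)\not\subset Q$ guarantees that the holomorphic function $Q(\tilde f)=Q(f_0,\dots,f_n)$ is not identically zero, so that $\log|Q(\tilde f)|$ is locally integrable and the pullback divisor $f^*Q$ coincides with the zero divisor $\nu^0_{Q(\tilde f)}$. This is exactly where the nondegeneracy assumption enters and makes all three functions well defined.

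Next I would write out the three quantities in their Jensen form. Applying Jensen's formula to the holomorphic function $Q(\tilde f)$ gives the counting term
$$N(r,r_0,f^*Q)=\int_{S(r)}\log|Q(\tilde f)|\,\sigma_m-\int_{S(r_0)}\log|Q(\tilde f)|\,\sigma_m,$$
as recorded above. The proximity function is already given in the same boundary form,
$$m_f(r,r_0,Q)=\int_{S(r)}\log\frac{\|\tilde f\|^d}{|Q(\tilde f)|}\,\sigma_m-\int_{S(r_0)}\log\frac{\|\tilde f\|^d}{|Q(\tilde f)|}\,\sigma_m,$$
and the characteristic function satisfies, again by Jensen's formula,
$$T_f(r,r_0)=\int_{S(r)}\log\|\tilde f\|\,\sigma_m-\int_{S(r_0)}\log\|\tilde f\|\,\sigma_m+O(1).$$

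The heart of the argument is then to add the first two displays and observe that the terms $\log|Q(\tilde f)|$ cancel, since $\log\frac{\|\tilde f\|^d}{|Q(\tilde f)|}+\log|Q(\tilde f)|=d\log\|\tilde f\|$ pointwise on each sphere $S(r)$ and $S(r_0)$. This yields
$$m_f(r,r_0,Q)+N(r,r_0,f^*Q)=d\Big(\int_{S(r)}\log\|\tilde f\|\,\sigma_m-\int_{S(r_0)}\log\|\tilde f\|\,\sigma_m\Big),$$
and comparing with the expression for $T_f$ gives $m_f(r,r_0,Q)+N(r,r_0,f^*Q)=dT_f(r,r_0)+O(1)$, which is the assertion. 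To finish I would note that the constant is genuinely independent of $r$: each individual boundary integral varies with $r$, but the only $r$-independent discrepancy comes from the additive constant in the Jensen form of $T_f$, which is fixed once $r_0$ and the reduced representation are chosen.

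Since every required expression is already available, there is no serious analytic obstacle here; the one point deserving care is well-definedness and independence of the chosen reduced representation. Replacing $\tilde f$ by $g\tilde f$ with $g$ a nowhere-vanishing holomorphic function multiplies $\|\tilde f\|$ by $|g|$ and $Q(\tilde f)$ by $g^d$, so the integrand of $m_f$ is unchanged, the divisor $\nu^0_{Q(\tilde f)}$ is unchanged because $g$ has no zeros, and the contribution of $\log|g|$ to $T_f$ is absorbed into the $O(1)$. Thus the identity is invariant under the choice of representation, completing the proof.
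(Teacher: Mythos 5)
Your proof is correct, and it is exactly the argument the paper intends: the paper states the First Main Theorem without proof immediately after recording the three Jensen-form displays for $T_f$, $m_f$, and $N(r,r_0,f^*Q)$, and your pointwise cancellation $\log\frac{\|\tilde f\|^d}{|Q(\tilde f)|}+\log|Q(\tilde f)|=d\log\|\tilde f\|$ is precisely how those displays combine to give the identity. Your additional check that everything is independent of the choice of reduced representation is a sensible (if standard) verification that the paper leaves implicit.
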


If $ \lim\limits_{r\rightarrow 1}\sup\dfrac{T(r,r_0)}{\log 1/(1-r)}= \infty,$ then the Nevanlinna's defect of $f$ with respect to the hypersurface $Q$ truncated to level $l$ is defined by
$$ \delta^{[l]}_{f,*}(Q)=1-\lim\mathrm{sup}\dfrac{N^{l}(r,r_0,f^*Q)}{T_f(r,r_0)}.$$
There is a fact that 
$$0\le \delta^{[l]}_f(Q)\le\delta^{[l]}_{f,*}(Q)\le 1. $$
(See Proposition 2.1 in \cite{RS})

\vskip0.2cm
\noindent
{\bf 2.3. Auxiliary results.}\ Repeating the argument in \cite[Proposition 4.5]{F85}, we have the following.

\begin{proposition}
Let $F_0,\ldots ,F_{N}$ be meromorphic functions on the ball $\B^m(R_0)$ in $\mathbb C^m$ such that $\{F_0,\ldots ,F_{N}\}$ are  linearly independent over $\mathbb C.$ Then  there exists an admissible set  
$$\{\alpha_i=(\alpha_{i1},...,\alpha_{im})\}_{i=0}^{N} \subset \mathbb Z^m_+$$
with $|\alpha_i|=\sum_{j=1}^{m}|\alpha_{ij}|\le i \ (0\le i \le N)$ such that the following are satisfied:

(i)\  $W_{\alpha_0,\ldots ,\alpha_{N}}(F_0,\ldots ,F_{N})\overset{Def}{:=}\det{({\mathcal D}^{\alpha_i}\ F_j)_{0\le i,j\le N}}\not\equiv 0.$ 

(ii) $W_{\alpha_0,\ldots ,\alpha_{N}}(hF_0,\ldots ,hF_{N})=h^{N+1}W_{\alpha_0,\ldots ,\alpha_{N}}(F_0,\ldots ,F_{N})$ for any nonzero meromorphic function $h$ on $\B^m(R_0).$
\end{proposition}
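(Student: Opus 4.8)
The plan is to argue entirely over the field $\mathcal{M}$ of meromorphic functions on $\B^m(R_0)$, viewing each partial derivative as a row vector. For a multi-index $\alpha\in\mathbb{Z}^m_+$ I write $v_\alpha=(\mathcal{D}^\alpha F_0,\dots,\mathcal{D}^\alpha F_N)\in\mathcal{M}^{N+1}$, so that $W_{\alpha_0,\dots,\alpha_N}(F_0,\dots,F_N)=\det(v_{\alpha_0},\dots,v_{\alpha_N})$ is not identically zero precisely when $v_{\alpha_0},\dots,v_{\alpha_N}$ are linearly independent over $\mathcal{M}$. The first task is to show that the $\mathcal{M}$-span $V$ of all the $v_\alpha$ has dimension exactly $N+1$. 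If instead $\dim_{\mathcal{M}}V\le N$, there is a nonzero covector $(c_0,\dots,c_N)\in\mathcal{M}^{N+1}$ with $\sum_j c_j\,\mathcal{D}^\alpha F_j\equiv 0$ for every $\alpha$. Choosing such a relation with the fewest nonzero entries, differentiating it in the variable $z_l$, and invoking the relation already in force at $\alpha+e_l$, I obtain the new relation $\sum_j(\mathcal{D}_l c_j)\,\mathcal{D}^\alpha F_j\equiv 0$ with strictly smaller support unless $\mathcal{D}_l c_j\equiv 0$ for all $j$. Minimality then forces every $c_j$ to be constant, whereupon the case $\alpha=0$ gives a nontrivial $\mathbb{C}$-linear relation $\sum_j c_j F_j\equiv 0$, contradicting linear independence. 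Hence $\dim_{\mathcal{M}}V=N+1$.

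Next I would control the degrees through the filtration $V_d:=\mathrm{span}_{\mathcal{M}}\{v_\alpha:|\alpha|\le d\}$. The key is a stabilization lemma: if $V_d=V_{d+1}$, then $V_d=V$. This follows from the product rule, because $v_{\beta+e_l}=\mathcal{D}_l v_\beta$ componentwise, so the equality $V_d=V_{d+1}$ propagates to $V_{d+1}=V_{d+2}$, and so on. Since $\dim_{\mathcal{M}}V_0=1$ (as $v_0=(F_0,\dots,F_N)\not\equiv 0$) and the dimension must strictly increase at each step until it reaches $N+1$, I get the estimate $\dim_{\mathcal{M}}V_d\ge\min(d+1,N+1)$.

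I would then construct the admissible set greedily. Enumerate all multi-indices in an order that is nondecreasing in $|\alpha|$ (ordering by total degree with arbitrary tie-breaking; note this automatically refines the coordinatewise partial order), scan through them, and adjoin $\beta$ to the set exactly when $v_\beta$ is independent of the vectors already chosen. As $\dim_{\mathcal{M}}V=N+1$, this selects exactly $N+1$ indices $\alpha_0,\dots,\alpha_N$ in order of selection, with $\det(v_{\alpha_0},\dots,v_{\alpha_N})\not\equiv 0$, giving (i). The bound $|\alpha_i|\le i$ then reads off from the previous step: all indices of degree $\le d-1$ are scanned before any index of degree $d$, and those chosen among them form a basis of $V_{d-1}$, hence number at least $d$; so a degree-$d$ index cannot land before position $d$. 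The same ordering delivers the feature that powers (ii): for every $\gamma<\alpha_i$ the vector $v_\gamma$ already lies in $\mathrm{span}\{v_{\alpha_k}:k<i\}$, since $\gamma$ was scanned earlier and was either selected or skipped as dependent on earlier selections. For (ii) itself I expand by the Leibniz rule, writing the $i$-th row of $(\mathcal{D}^{\alpha_i}(hF_j))_{i,j}$ as $h\,v_{\alpha_i}+\sum_{0\ne\beta\le\alpha_i}\binom{\alpha_i}{\beta}(\mathcal{D}^\beta h)\,v_{\alpha_i-\beta}$. Reducing from the top down, once rows $0,\dots,i-1$ have been brought to the form $h v_{\alpha_0},\dots,h v_{\alpha_{i-1}}$, I clear each lower-order term $v_{\alpha_i-\beta}$ in row $i$ by subtracting suitable $\mathcal{M}$-multiples of these rows, which is legitimate exactly because $v_{\alpha_i-\beta}\in\mathrm{span}\{v_{\alpha_k}:k<i\}$. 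Row operations preserve the determinant, so after reduction every row is $h v_{\alpha_i}$ and the determinant equals $h^{N+1}\det(v_{\alpha_0},\dots,v_{\alpha_N})=h^{N+1}W_{\alpha_0,\dots,\alpha_N}(F_0,\dots,F_N)$.

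The main obstacle is not any single computation but coordinating the two demands: securing the sharp degree bound $|\alpha_i|\le i$ while simultaneously guaranteeing that every lower-order vector $v_\gamma$ with $\gamma<\alpha_i$ is spanned by the earlier chosen rows. A set satisfying only $|\alpha_i|\le i$ need not satisfy (ii), since a carelessly chosen index may ``skip over'' an intermediate multi-index whose vector is not recovered from the earlier rows; this is why the greedy selection must proceed in an order compatible with the partial order, and why the degree bound must be extracted from the stabilization lemma rather than imposed by hand. Establishing the stabilization lemma and this compatibility is where the genuine content lies.
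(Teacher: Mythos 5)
Your proposal is correct and follows essentially the same route as the paper's source for this statement (Fujimoto's Proposition 4.5 in [F85], to which the paper defers without writing out a proof): build the admissible set inductively by rank considerations over the field $\mathcal{M}$ of meromorphic functions, with the degree bound $|\alpha_i|\le i$ coming from the stabilization of the filtration $V_d$, and (ii) obtained by Leibniz expansion plus row reduction using that every $v_\gamma$ with $\gamma<\alpha_i$ lies in the span of the earlier rows. One small point to tighten: in the minimal-support step, first normalize one nonzero coefficient to $1$ (legitimate since the annihilator of $V$ is an $\mathcal{M}$-subspace) before differentiating — as written, the dichotomy "strictly smaller support unless $\mathcal{D}_l c_j\equiv 0$ for all $j$" is false, since $(\mathcal{D}_l c_j)_j$ can have the same support as $(c_j)_j$ without vanishing, whereas after normalization the distinguished entry is killed and minimality does force all $c_j$ to be constant.
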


In \cite{RS}, M. Ru and S. Sogome gave the following lemma on logarithmic derivative for the meromorphic mappings of a ball in $\C^m$ into $\P^n(\C)$.

\begin{proposition}[{see \cite[Proposition 3.3]{RS}}]\label{pro2.2}
 Let $L_0,\ldots ,L_{N}$ be linear forms of $N+1$ variables and assume that they are linearly independent. Let $F$ be a meromorphic mapping of the ball $\B^m(R_0)\subset\C^m$ into $\P^{N}(\C)$ with a reduced representation $\tilde F=(F_0,\ldots ,F_{N})$ and let $(\alpha_0,\ldots ,\alpha_N)$ be an admissible set of $F$. Set $l=|\alpha_0|+\cdots +|\alpha_N|$ and take $t,p$ with $0< tl< p<1$. Then, for $0 < r_0 < R_0,$ there exists a positive constant $K$ such that for $r_0 < r < R < R_0$,
$$\int_{S(r)}\biggl |z^{\alpha_0+\cdots +\alpha_N}\dfrac{W_{\alpha_0,\ldots ,\alpha_N}(F_0,\ldots ,F_{N})}{L_0(\tilde F)\ldots L_{N}(\tilde F)}\biggl |^t\sigma_m\le K\biggl (\dfrac{R^{2m-1}}{R-r}T_F(R,r_0)\biggl )^p.$$
\end{proposition}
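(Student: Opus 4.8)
The plan is to reduce the Wronskian estimate to the classical lemma on the logarithmic derivative for a \emph{single} meromorphic function, after which the factor $\bigl(\tfrac{R^{2m-1}}{R-r}T_F(R,r_0)\bigr)^p$ emerges from a several-variable calculus (growth) lemma. First I would remove the linear forms. Since $L_0,\dots,L_N$ are linearly independent, there is $A=(a_{jk})\in \mathrm{GL}_{N+1}(\C)$ with $L_j(\tilde F)=\sum_k a_{jk}F_k=:G_j$. The common zero set of $\{G_j\}$ equals that of $\{F_k\}$, which has codimension $\ge 2$, so $\tilde G=(G_0,\dots,G_N)$ is again a reduced representation of $F$; as $\|\tilde G\|$ and $\|\tilde F\|$ are mutually bounded (by norms of $A,A^{-1}$), we get $T_{G_j}(R,r_0)\le T_F(R,r_0)+O(1)$ by the standard comparison of a coordinate of a reduced representation with the map. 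Writing the Wronskian matrix in the $G_j$ as the $F$-matrix times $A^{\mathsf T}$ and using multilinearity of the determinant together with linearity of each $\mathcal D^{\alpha_i}$ gives $W_{\alpha_0,\dots,\alpha_N}(G_0,\dots,G_N)=\det(A)\,W_{\alpha_0,\dots,\alpha_N}(F_0,\dots,F_N)$ with $\det A\neq 0$. Hence it suffices to bound $\int_{S(r)}\bigl|z^{\alpha_0+\cdots+\alpha_N}W_{\alpha_0,\dots,\alpha_N}(G_0,\dots,G_N)/(G_0\cdots G_N)\bigr|^t\sigma_m$.

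Next I would expand the normalized Wronskian. Dividing the $j$-th column by $G_j$ yields
$$\frac{W_{\alpha_0,\dots,\alpha_N}(G_0,\dots,G_N)}{G_0\cdots G_N}=\det\Bigl(\tfrac{\mathcal D^{\alpha_i}G_j}{G_j}\Bigr)_{0\le i,j\le N}=\sum_{\pi}\mathrm{sgn}(\pi)\prod_{j=0}^N\frac{\mathcal D^{\alpha_{\pi(j)}}G_j}{G_j},$$
the sum over permutations $\pi$ of $\{0,\dots,N\}$, and multiplying by $z^{\alpha_0+\cdots+\alpha_N}=\prod_j z^{\alpha_{\pi(j)}}$ distributes the monomial across the factors. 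Because $0<t<1$, the map $x\mapsto x^t$ is subadditive, so raising to the $t$-th power and summing gives
$$\Bigl|z^{\alpha_0+\cdots+\alpha_N}\tfrac{W_{\alpha_0,\dots,\alpha_N}(G_0,\dots,G_N)}{G_0\cdots G_N}\Bigr|^t\le\sum_{\pi}\prod_{j=0}^N\Bigl|z^{\alpha_{\pi(j)}}\tfrac{\mathcal D^{\alpha_{\pi(j)}}G_j}{G_j}\Bigr|^t.$$
Since $\sigma_m$ is a probability measure on $S(r)$, a generalized H\"older inequality with weights $\lambda_j>0$, $\sum_j \lambda_j^{-1}=1$, factors each permutation term into a product of single-function integrals $\bigl(\int_{S(r)}|z^{\alpha_{\pi(j)}}\mathcal D^{\alpha_{\pi(j)}}G_j/G_j|^{t\lambda_j}\sigma_m\bigr)^{1/\lambda_j}$.

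The heart of the argument is then the \emph{single-function} logarithmic derivative estimate: for a nonzero meromorphic $g=G_j$, a multi-index $\beta$, and $0<s|\beta|<\delta<1$, there is $K$ with $\int_{S(r)}|z^{\beta}\mathcal D^{\beta}g/g|^{s}\sigma_m\le K\bigl(\tfrac{R^{2m-1}}{R-r}T_g(R,r_0)\bigr)^{\delta}$. I would establish this by viewing $g$ as a map into $\P^1(\C)$, using concavity of the logarithm and Jensen's formula to dominate the sphere-mean of $\log|z^{\beta}\mathcal D^{\beta}g/g|$ by the growth of $T_g$, and then extracting the factor $\tfrac{R^{2m-1}}{R-r}$ and the power $\delta<1$ from the two-step application of the several-variable calculus (Borel growth) lemma; the higher-order case $|\beta|>1$ follows by iterating the first-order estimate. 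Combined with $T_{G_j}(R,r_0)\le T_F(R,r_0)+O(1)$ from the first paragraph, every factor is controlled by a single power of $\tfrac{R^{2m-1}}{R-r}T_F(R,r_0)$.

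It remains to track exponents. In the $j$-th factor I would apply the single-function bound with $s=t\lambda_j$ and a $\delta_j$ satisfying $t\lambda_j|\alpha_{\pi(j)}|<\delta_j<1$, giving total power $P=\sum_j \delta_j/\lambda_j$. The lower bound $\delta_j/\lambda_j>t|\alpha_{\pi(j)}|$ forces $P>t\sum_j|\alpha_{\pi(j)}|=tl$, while $\delta_j<1$ forces $P<\sum_j\lambda_j^{-1}=1$; choosing the weights with $\lambda_j^{-1}>t|\alpha_{\pi(j)}|$ (possible exactly because $tl<1$) keeps the constraints compatible and lets $P$ realize any prescribed $p\in(tl,1)$. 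Thus the hypothesis $0<tl<p<1$ is precisely what the bookkeeping needs. Absorbing $\det A$, the finitely many permutations, and the H\"older, Jensen and calculus-lemma constants into one $K>0$ yields the asserted inequality. I expect the genuine difficulty to lie entirely in the single-function estimate: producing the sharp factor $\tfrac{R^{2m-1}}{R-r}$ and the sub-unit power $\delta$ from the growth lemma for higher-order derivatives, while coordinating all the auxiliary exponents so that $0<tl<p<1$ is preserved through the H\"older splitting.
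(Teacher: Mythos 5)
The paper itself offers no proof of this proposition: it is quoted verbatim from Ru--Sogome \cite[Proposition 3.3]{RS}, so the only meaningful comparison is with the proof given there (which follows Fujimoto's method). Your outer architecture coincides with that proof and is correct: the reduction to $L_j(\tilde F)=F_j$ via an invertible matrix $A$ (with $W_{\alpha_0,\ldots,\alpha_N}(G_0,\ldots,G_N)=\det(A)\,W_{\alpha_0,\ldots,\alpha_N}(F_0,\ldots,F_N)$), the expansion of the normalized Wronskian over permutations, the subadditivity of $x\mapsto x^t$ for $0<t<1$, the generalized H\"older inequality on the probability space $(S(r),\sigma_m)$, and the exponent bookkeeping showing that the hypothesis $0<tl<p<1$ is exactly what makes the weights $\lambda_j$ and exponents $\delta_j$ compatible --- all of this is sound and is how the cited proof is organized.

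The genuine gap is in your proposed proof of the single-function estimate, which you correctly identify as the heart of the matter. You say you would bound the sphere-mean of $\log\bigl|z^{\beta}\mathcal D^{\beta}g/g\bigr|$ using ``concavity of the logarithm and Jensen's formula'' and then extract the desired bound. This cannot work: concavity gives $\exp\bigl(\int_{S(r)}\log|h|^t\sigma_m\bigr)\le\int_{S(r)}|h|^t\sigma_m$, so a bound on the logarithmic mean is \emph{weaker} than, and can never imply, a bound on the $t$-power mean (take $|h|=e^{1/\varepsilon}$ on a set of measure $\varepsilon$: the log-mean stays bounded while $\int_{S(r)}|h|^t\sigma_m\to\infty$). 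Indeed, in this subject concavity is used in the opposite direction --- to pass \emph{from} Proposition \ref{pro2.2} \emph{to} estimates of integrals of logarithms, exactly as in the proof of Theorem \ref{5.3} in this paper. The actual mechanism in Fujimoto and Ru--Sogome is potential-theoretic: one differentiates the Poisson--Jensen (Green--Riesz) representation of $\log|g|$ on a slightly larger ball to get a pointwise integral formula for the first-order logarithmic derivative, and the $L^t$ bound then follows because the resulting kernels are $t$-integrable precisely when $t<1$; the factor $R^{2m-1}/(R-r)$ arises from these kernel estimates, not from a Borel-type growth lemma (that lemma is used later, in the proof of the defect relation, to choose $R$ as a function of $r$). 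Moreover, your induction on $|\beta|$ silently requires a bound of the form $T_{\mathcal D^{\beta'}g}(R,r_0)\le C\,T_g(R',r_0)+O(1)$ (the characteristic of a derivative controlled by that of the function, with a loss in radius), which must be proved alongside the first-order estimate and is absent from your sketch. Until the $L^t$ logarithmic-derivative lemma is established by such an argument, the proposition remains unproved.
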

Here $z^{\alpha_i}=z_1^{\alpha_{i1}}...z_m^{\alpha_{im}},$ where $\alpha_i=(\alpha_{i1},...,\alpha_{im})\in\mathbb N^m_0$.

\section{Non-integrated defect relation for nondegenerate mappings sharing hypersurfaces in subgeneral position}

First of all, we need the following lemma due to \cite{Q16-1,Q16-2}. For the sake of completeness, we also include the proofs.

\begin{lemma}[{see \cite[Lemma 3.1]{Q16-1}, \cite[Lemma 3.1]{Q16-2}}]\label{lem3.1}
Let $Q_1,...,Q_{k+1}$ be hypersurfaces in $\P^n(\C)$ of the same degree $d\ge 1$, such that
$$\left (\bigcap_{i=1}^{k+1}Q_i\right )=\emptyset.$$
Then there exist $n$ hypersurfaces $P_{2},...,P_{n+1}$ of the forms
$$P_t=\sum_{j=2}^{k-n+t}c_{tj}Q_j, \ c_{tj}\in\C,\ t=2,...,n+1,$$
such that $\left (\bigcap_{t=1}^{n+1}P_t\right )=\emptyset,$ where $P_1=Q_1$.
\end{lemma}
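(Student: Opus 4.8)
The plan is to construct the hypersurfaces $P_2,\dots,P_{n+1}$ inductively, choosing the coefficients $c_{tj}$ so that at each stage the common zero locus of $P_1,\dots,P_t$ has dimension strictly less than the common zero locus of $P_1,\dots,P_{t-1}$. The governing principle is the standard fact that a sufficiently generic linear combination of a collection of homogeneous polynomials cuts down the dimension of an algebraic set by one, as long as that set is not already contained in the zero locus of every polynomial in the collection. Since $\bigcap_{i=1}^{k+1}Q_i=\emptyset$, no proper subvariety of $\P^n(\C)$ can contain all of $Q_1,\dots,Q_{k+1}$, and this is exactly the hypothesis that will feed the inductive step.

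First I would set up the induction. Put $P_1=Q_1$ and let $Z_1=\{P_1=0\}$, a hypersurface of dimension $n-1$. Suppose inductively that $P_1,\dots,P_{t-1}$ have been constructed with $P_s=\sum_{j=2}^{k-n+s}c_{sj}Q_j$ for $s=2,\dots,t-1$ and that $Z_{t-1}:=\bigcap_{s=1}^{t-1}\{P_s=0\}$ has dimension at most $n-(t-1)$. I want to produce $P_t=\sum_{j=2}^{k-n+t}c_{tj}Q_j$ with $\dim\bigl(Z_{t-1}\cap\{P_t=0\}\bigr)\le n-t$. Decompose $Z_{t-1}$ into its irreducible components. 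For each component $W$ of $Z_{t-1}$ of dimension $n-(t-1)$, I claim that not all of $Q_2,\dots,Q_{k-n+t}$ vanish identically on $W$: otherwise $W$ would lie in $\bigcap_{j=2}^{k-n+t}\{Q_j=0\}\cap\{Q_1=0\}$, which together with the indices available would force $W$ into an intersection of $k+1$ of the $Q_i$; but that intersection is empty, a contradiction. Hence on each top-dimensional component there is some $Q_j$ ($2\le j\le k-n+t$) not vanishing identically, and a generic choice of the $c_{tj}$ makes $P_t$ non-vanishing on each such component, so $P_t$ meets each top-dimensional $W$ in dimension $n-t$ and the lower-dimensional components can only drop further. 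This yields $\dim Z_t\le n-t$.

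The key technical point, and the step I expect to be the main obstacle, is the index bookkeeping that guarantees the ``$k+1$ hypersurfaces'' contradiction actually applies. At stage $t$ the polynomial $P_t$ is allowed to use only $Q_2,\dots,Q_{k-n+t}$, so at the final stage $t=n+1$ one uses $Q_2,\dots,Q_{k+1}$, exactly $k$ of the $Q_i$ beyond $Q_1=P_1$. I must verify that whenever a top-dimensional component $W$ of $Z_{t-1}$ forces all the available $Q_j$ to vanish, the total number of distinct $Q_i$ vanishing on $W$ — namely $Q_1$ together with $Q_2,\dots,Q_{k-n+t}$ — is at least $k+1$, triggering the emptiness hypothesis; this is where the bound $\dim Z_{t-1}\le n-(t-1)$ and the count $k-n+t$ of available polynomials must be matched carefully against the subgeneral-position condition. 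Because $W$ has dimension $n-(t-1)$ and sits inside the intersection of the vanishing loci of those $Q_i$, a dimension-counting argument shows the number of independent conditions defining $W$ is too large unless enough of the $Q_i$ are genuinely involved, and reconciling this with the precise index range is the delicate part.

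Finally, after completing the induction I obtain $Z_{n+1}=\bigcap_{t=1}^{n+1}\{P_t=0\}$ with $\dim Z_{n+1}\le n-(n+1)<0$, i.e.\ $Z_{n+1}=\emptyset$, which is the desired conclusion. The genericity of the coefficient choices at each step is unproblematic: the set of ``bad'' coefficient vectors causing $P_t$ to vanish on a prescribed top-dimensional component is a proper Zariski-closed subset of the affine space of coefficients, and a finite intersection of complements of such sets is nonempty over $\C$, so admissible $c_{tj}$ exist at every stage. The only genuine care required throughout is ensuring each $P_t$ has the prescribed form with summation range $j=2,\dots,k-n+t$ and that the dimension drop is strict, both of which are handled by the component-by-component argument above.
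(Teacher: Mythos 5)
Your overall strategy---induct on $t$, show that each top-dimensional component $W$ of $Z_{t-1}$ fails to be contained in some available $Q_j$, and then choose the coefficients outside a finite union of proper (in fact linear) subspaces of the coefficient space---is exactly the paper's argument. However, the one step you yourself flag as the main obstacle is left open, and the mechanism you sketch for it does not work. You argue that if all of $Q_2,\dots,Q_{k-n+t}$ vanished identically on $W$, then $W$ would lie in an intersection of $k+1$ of the $Q_i$ and the emptiness hypothesis would be triggered. That is only true at the final stage $t=n+1$: at stage $t$ only the $k-n+t$ hypersurfaces $Q_1,\dots,Q_{k-n+t}$ are in play, and $k-n+t<k+1$ for every $t\le n$, so no intersection of $k+1$ of the $Q_i$ ever arises. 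Your fallback remark that ``a dimension-counting argument shows the number of independent conditions defining $W$ is too large'' is a statement of what needs to be proved, not a proof.

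The missing ingredient is elementary but must be supplied: starting from $\bigcap_{i=1}^{k+1}Q_i=\emptyset$ and peeling off one hypersurface at a time---using that intersecting a projective variety with a hypersurface lowers dimension by at most one, and is nonempty whenever the variety has positive dimension---one gets $\dim\bigl(\bigcap_{i=1}^{s}Q_i\bigr)\le k-s$ for all $s\le k+1$. Taking $s=k-n+t$ gives $\dim\bigl(\bigcap_{i=1}^{k-n+t}Q_i\bigr)\le n-t$, which is strictly smaller than $\dim W=n-t+1$; hence $W\not\subset Q_j$ for some $j$ with $2\le j\le k-n+t$ (the index $j=1$ contributes nothing new since $W\subset Z_{t-1}\subset P_1=Q_1$). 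This descending chain of dimension bounds is precisely what the paper records at the outset of its proof and invokes at each step. Once it is inserted, your induction closes, and the remainder of your write-up (finitely many top-dimensional components, each giving a proper linear subspace of $\C^{k-n+t-1}$, whose union cannot exhaust the coefficient space over $\C$) coincides with the paper's Steps 1 and 2.
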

\begin{proof} 
Set $P_1=Q_1$. It is easy to see that
$$ \dim \left(\bigcap_{i=1}^tQ_i\right)\le k-t+1,\ t=k-n+2,...,k+1,$$
where $\dim\emptyset =-\infty$.

Step 1. We firstly construct $P_2$ as follows. For each irreducible component $I$ of dimension $n-1$ of $P_1$, we put 
$$V_{1I}=\{c=(c_2,...,c_{k-n+2})\in\C^{k-n+1}\ ;\ I\subset Q_c,\text{ where }Q_c=\sum_{j=2}^{k-n+2}c_jQ_j\}.$$
Here, we also consider the case where $Q_c$ may be zero polynomial and it determines all $\P^n(\C)$. It easy to see that $V_{1I}$ is a subspace of $\C^{k-n+1}$. Since $\dim \left(\bigcap_{i=0}^{k-n+1}Q_i\right)\le k-2$, there exists $i (1\le i\le k-n+1)$ such that $I\not\subset Q_i$. This implies that $V_{1I}$ is a proper subspace of $\C^{k-n+1}$. Since the set of irreducible components of dimension $k-1$ of $P_0$ is finite, 
$$ \C^{k-n+1}\setminus\bigcup_{I}V_{1I}\ne\emptyset. $$
Then, there exists $(c_{12},...,c_{1(k-n+2)})\in\C^{k-n+1}$ such that the hypersurface 
$$P_2=\sum_{j=2}^{k-n+2}c_{1j}Q_j$$
does not contain any irreducible component of dimension $k-1$ of $P_1$.
This implies that $\dim \left(P_1\cap P_2\right)\le k-2.$

Step 2. Similarly, for each irreducible component $I'$ of dimension $n-2$ of $\left(P_1\cap P_2\right)$, put 
$$V_{2I'}=\{c=(c_2,...,c_{k-n+3})\in\C^{k-n+2}\ ;\ I'\subset Q'_c,\text{ where }Q'_c=\sum_{j=2}^{k-n+3}c_jQ_j\}.$$
Hence, $V_{2I'}$ is a subspace of $\C^{k-n+2}$. Since $\dim \left(\bigcap_{i=1}^{k-n+3}Q_i\right)\le n-3$, there exists $i, (2\le i\le k-n+3)$ such that $I'\not\subset Q_i$. Hence $V_{2I'}$ is a proper subspace of $\C^{k-n+2}$. Since the set of irreducible components of dimension $n-2$ of $\left(P_1\cap P_2\right)$ is infinite, 
$$ \C^{k-n+2}\setminus\bigcup_{I'}V_{2I'}\ne\emptyset. $$
Then, there exists $(c_{22},...,c_{2(N-k+3)})\in\C^{k-n+2}$ such that the hypersurface 
$$P_3=\sum_{j=2}^{k-n+3}c_{2j}Q_j$$
does not contain any irreducible components of dimension $n-2$ of $P_1\cap P_2$. Hence $\dim \left(P_1\cap P_2\cap P_3\right)\le n-3.$

Repeating again the above steps, after the $n$-th step we get the hypersurfaces $P_2,...,P_{n+1}$ satisfying that
$$ \dim\left(\bigcap_{j=1}^tP_j\right)\le n-t. $$
In particular, $\left(\bigcap_{j=1}^{n+1}P_j\right)=\emptyset.$ The lemma is proved.
\end{proof}

Let $f:M\longrightarrow \mathbb P^n(\mathbb C)$ be a meromorphic mapping with a reduced representation $\tilde f=(f_0,\ldots ,f_n)$. We define
$$ Q_i(\tilde f)=\sum_{I\in\mathcal I_d}a_{iI}f^I ,$$
where $f^I=f_0^{i_0}\cdots f_n^{i_n}$ for $I=(i_0,...,i_n)$. Then we can consider $f^*Q_i=\nu_{Q_i(\tilde f)}$ as divisors. We now have the following.

\begin{lemma}\label{3.2}
Let $\{Q_i\}_{i\in R}$ be a family of hypersurfaces in $\mathbb P^n(\mathbb C)$ of the common degree $d$ and let $f$ be a meromorphic mapping of $\mathbb C^m$ into $\mathbb P^n(\mathbb C)$. Assume that $\bigcap_{i\in R}Q_i=\emptyset$. Then, there exist positive constants $\alpha$ and $\beta$ such that
$$\alpha ||\tilde f||^d \le  \max_{i\in R}|Q_i(\tilde f)|\le \beta ||\tilde f||^d.$$
\end{lemma}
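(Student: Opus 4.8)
The plan is to prove the inequality $\alpha\|\tilde f\|^d \le \max_{i\in R}|Q_i(\tilde f)| \le \beta\|\tilde f\|^d$ pointwise on $\mathbb{C}^{n+1}\setminus\{0\}$ (working with the homogeneous polynomials $Q_i$ as functions on $\mathbb{C}^{n+1}$) and then specialize to the values $\tilde f(z)$. The right-hand inequality is the easy direction: each $Q_i$ is a homogeneous polynomial of degree $d$, so $|Q_i(w)| \le \bigl(\sum_{I\in\mathcal I_d}|a_{iI}|\bigr)\|w\|^d$ by the triangle inequality and homogeneity, and one takes $\beta$ to be the maximum of these finitely many coefficient sums over $i\in R$. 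The content is entirely in the left-hand inequality.

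For the left-hand inequality I would argue by homogeneity and compactness. Define $g(w) = \max_{i\in R}|Q_i(w)|$; this is a continuous, nonnegative, degree-$d$ positively-homogeneous function on $\mathbb{C}^{n+1}$, meaning $g(\lambda w) = |\lambda|^d g(w)$ for $\lambda\in\mathbb{C}$. First I would observe that $g(w)>0$ for every $w\ne 0$: if $g(w_0)=0$ then $Q_i(w_0)=0$ for all $i\in R$, so the point $[w_0]\in\mathbb{P}^n(\mathbb{C})$ lies in $\bigcap_{i\in R}Q_i$, contradicting the hypothesis $\bigcap_{i\in R}Q_i=\emptyset$. Next I would restrict $g$ to the unit sphere $S=\{w:\|w\|=1\}$, which is compact, so the continuous strictly positive function $g$ attains a positive minimum $\alpha>0$ there. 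By homogeneity, for arbitrary $w\ne 0$ one writes $w=\|w\|\cdot(w/\|w\|)$ and gets $g(w)=\|w\|^d\,g(w/\|w\|)\ge \alpha\|w\|^d$, which is exactly the claimed bound after substituting $w=\tilde f(z)$.

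One subtlety to address is whether the family $R$ is finite, since the argument as stated uses that $\beta$ is a maximum over finitely many coefficient sums and that $g$ is continuous (a max over infinitely many continuous functions need not be continuous). In the applications coming from Lemma \ref{lem3.1}, the relevant families are finite collections of hypersurfaces, so I would either assume $R$ finite outright or note that $\bigcap_{i\in R}Q_i=\emptyset$ already forces a finite subfamily $R'\subset R$ with $\bigcap_{i\in R'}Q_i=\emptyset$ (by Noetherianity of the Zariski topology on $\mathbb{P}^n$), and then run the entire argument with $R'$ in place of $R$: the lower bound $\max_{i\in R'}|Q_i(\tilde f)|\ge \alpha\|\tilde f\|^d$ is inherited by the larger maximum over $R$, and the upper bound poses no difficulty once finiteness is in hand.

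The only genuine obstacle is confirming $g>0$ away from the origin, and this is precisely where the emptiness hypothesis $\bigcap_{i\in R}Q_i=\emptyset$ enters; everything else is the standard homogeneity-plus-compactness packaging. I would therefore structure the write-up as: reduce to a finite subfamily, establish positivity of $g$ on $S$ from the empty-intersection hypothesis, extract $\alpha$ by compactness, extract $\beta$ from the coefficient bound, and conclude by evaluating at $w=\tilde f(z)$.
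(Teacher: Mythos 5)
Your proof is correct and follows essentially the same route as the paper's: the paper considers the quotient $h(x)=\max_{i\in R}|Q_i(x)|/\|x\|^d$ as a positive continuous function on the compact space $\P^n(\C)$ and takes its minimum and maximum, which is the same homogeneity-plus-compactness argument you run on the unit sphere. Your write-up is in fact slightly more complete, since you make explicit the positivity step (where the hypothesis $\bigcap_{i\in R}Q_i=\emptyset$ is actually used) and the reduction to a finite subfamily, both of which the paper leaves implicit.
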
 
\begin{proof}  
Let $(x_0:\cdots: x_n)$ be homogeneous coordinates of $\mathbb P^n(\mathbb C)$. Assume that each $Q_i$ is defined by $\sum_{I\in\mathcal I_d}a_{iI}x^I=0.$ 

Set $Q_i(x)=\sum_{I\in\mathcal I_d}a_{iI}x^I$ and consider the following function
$$ h(x)=\dfrac{\max_{i\in R}|Q_i(x)|}{||x||^d}, $$
where $||x||=(\sum_{i=0}^n|x_i|^2)^{\frac{1}{2}}$.

Since the function $h$ is positive continuous on $\P^n(\C),$ by the compactness of  $\P^n(\C)$, there exist positive constants $\alpha$ and $\beta$ such that $\alpha =\min_{x\in \mathbb P^n(\mathbb C)}h(x)$ and $\beta =\max_{x\in \mathbb P^n(\mathbb C)}h(x)$. Therefore, we have
$$\alpha ||\tilde f||^d \le  \max_{i\in R}|Q_i(\tilde f)|\le \beta ||\tilde f||^d.$$
The lemma is proved. 
\end{proof}

By Jensen's formula, we have the following lemma.
\begin{lemma}\label{3.3b}
Let $\{L_i\}_{i=1}^{u}$ be a family of hypersurfaces in $\mathbb P^n(\mathbb C)$ of the common degree $d$ and let $f$ be a meromorphic mapping of $\B^m(R_0)\subset\mathbb C^m$ into $\mathbb P^n(\mathbb C)$, where $u=\binom{n+d}{n}$. Assume that $\{L_i\}_{i=1}^{u}$ are linearly independent. Then, for every $0<r_0<r<R_0$, we have
$$ T_F(r,r_0)=dT_f(r,r_0)+O(1), $$
where $F$ is the meromorphic mapping of $\B^m(R_0)$ into $\P^{u-1}(\C)$ defined by the representation $F=(L_1(\tilde f):\cdots :L_{u}(\tilde f))$.
\end{lemma}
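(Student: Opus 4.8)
The plan is to compare the two characteristic functions through their Jensen-formula expressions from Section 2.2 and to control the representation norm $||\tilde F||$ by $||\tilde f||^d$ using Lemma \ref{3.2}. First I would observe that $u=\binom{n+d}{n}=\#\mathcal T_d$ is exactly the dimension of the space of homogeneous polynomials of degree $d$ in the variables $\omega_0,\dots,\omega_n$. Since $\{L_i\}_{i=1}^u$ are linearly independent and number $u$, they form a basis of this space; in particular every monomial $\omega^I$ $(I\in\mathcal T_d)$, and hence every form of degree $d$, is a $\C$-linear combination of $L_1,\dots,L_u$. Consequently the common zero locus in $\P^n(\C)$ satisfies
$$\bigcap_{i=1}^u L_i=\{\,\omega:\omega^I=0\ \forall I\in\mathcal T_d\,\}=\{\,\omega:\omega_0=\cdots=\omega_n=0\,\}=\emptyset,$$
which is precisely the hypothesis needed to apply Lemma \ref{3.2} to the family $\{L_i\}_{i=1}^u$.

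Next I would record that $\tilde F=(L_1(\tilde f),\dots,L_u(\tilde f))$ is in fact a \emph{reduced} representation of $F$: because the $L_i$ span all monomials, $\{z:L_i(\tilde f)(z)=0\ \forall i\}=\{z:f^I(z)=0\ \forall I\}=\{z:f_0(z)=\cdots=f_n(z)=0\}$, and this set has codimension $\ge 2$ since $\tilde f$ is reduced. Thus the Jensen formula of Section 2.2 applies to $F$ with this very representation, with no extra counting term.

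The main estimate then comes from Lemma \ref{3.2}: there exist constants $0<\alpha\le\beta$ such that
$$\alpha||\tilde f||^d\le\max_{1\le i\le u}|L_i(\tilde f)|\le\beta||\tilde f||^d.$$
Since $\max_i|L_i(\tilde f)|\le||\tilde F||\le\sqrt{u}\,\max_i|L_i(\tilde f)|$, taking logarithms yields
$$\log\alpha+d\log||\tilde f||\le\log||\tilde F||\le\log(\sqrt{u}\,\beta)+d\log||\tilde f||,$$
that is $\log||\tilde F||=d\log||\tilde f||+O(1)$ with a bound uniform in $z$. Integrating over $S(r)$ against $\sigma_m$ (a probability measure, $\int_{S(r)}\sigma_m=1$) keeps the uniform $O(1)$ an $O(1)$, so that $\int_{S(r)}\log||\tilde F||\sigma_m=d\int_{S(r)}\log||\tilde f||\sigma_m+O(1)$; subtracting the corresponding value at $S(r_0)$ and invoking the Jensen formulas for $T_F$ and $T_f$ gives $T_F(r,r_0)=dT_f(r,r_0)+O(1)$.

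The step I would be most careful about is the reducedness claim that permits using $(L_1(\tilde f),\dots,L_u(\tilde f))$ directly inside Jensen's formula: were this representation to carry a nontrivial common factor $g$, an unwanted term $N_g(r,r_0)$ would appear and destroy the $O(1)$. This is exactly why the identity $u=\#\mathcal T_d$ (the spanning property of a basis of degree-$d$ forms) is essential — it simultaneously forces the empty intersection needed for Lemma \ref{3.2} and guarantees the reducedness of the representation of $F$.
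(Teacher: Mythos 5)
Your proof is correct and follows exactly the route the paper intends: the paper gives no written proof beyond the remark ``By Jensen's formula,'' and your argument --- that the $u=\binom{n+d}{n}$ linearly independent forms $L_i$ form a basis of the degree-$d$ forms, hence have empty common zero locus so that Lemma \ref{3.2} yields $\log\Vert\tilde F\Vert=d\log\Vert\tilde f\Vert+O(1)$ uniformly, and then Jensen's formula applied to the (correctly verified) reduced representation $\tilde F$ gives the conclusion --- is the standard argument being invoked. Your explicit check that $(L_1(\tilde f),\dots,L_u(\tilde f))$ is reduced is a worthwhile detail the paper leaves implicit.
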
 

\vskip0.2cm
\noindent
{\bf Proof of Theorem \ref{1.1}.} By using the universal covering if necessary, we may assume that $M=\B^{m}(1)$.

Replacing  $Q_j$ by $Q^{\frac{d}{d_j}}_j\ (j=1,...,q)$ if necessary, we may assume that $Q_j\ (j=1,\ldots ,q)$ have the same of the common degree $d$.

It is easy to see that there is a positive constant $\beta$ such that $\beta ||\tilde f||^d\ge |Q_i(\tilde f)|$ for every $1\le i\le q.$
We set 
$$ \mathcal A=\{(i_1,...,i_{k+1})\ ; 1\le i_j\le q, i_j\ne i_t\ \forall j\ne t\}. $$
For each $I=(i_1,...,i_{k+1})\in\mathcal A$, we denote by $P_{I1},...,P_{I(n+1)}$ the hypersurfaces obtained in Lemma \ref{lem3.1} with respect to the family of hypersurfaces $\{Q_{i_1},...,Q_{i_{k+1}}\}$. It is easy to see that there exists a positive constant $B\ge 1$, which is chosen common for all $I\in\mathcal A$, such that
$$ |P_{It}(\omega)|\le B\max_{1\le j\le k+1-n+t}|Q_{i_j}(\omega)|, $$
for all $\omega=(\omega_0,...,\omega_n)\in\C^{n+1}$.

Consider a reduced representation $\tilde f=(f_0,\ldots ,f_n): \B^m(1)\rightarrow \C^{n+1}$ of $f$. For a fixed point $z\in \B^m(1)\setminus\bigcup_{i=1}^qQ_i(\tilde f)^{-1}(\{0\})$.  We may assume that
$$ |Q_{i_1}(\tilde f)(z)|\le |Q_{i_2}(\tilde f)(z)|\le\cdots\le |Q_{i_{q}}(\tilde f)(z)|. $$
Since $Q_{i_1},\ldots,Q_{i_q}$ are in $k-$subgeneral position, by Lemma \ref{lem3.2}, there exists a positive constant $A$, which is chosen common for all $z$ and $(i_1,...,i_{q})$, such that
$$ ||\tilde f (z)||^d\le A\max_{1\le j\le k+1}|Q_{i_j}(\tilde f)(z)|=A|Q_{i_{k+1}}(\tilde f)(z)|.$$
Therefore, we have
\begin{align*}
\prod_{i=1}^q\dfrac{||\tilde f (z)||^d}{|Q_i(\tilde f)(z)|}&\le A^{q-k}\prod_{j=1}^{k}\dfrac{||\tilde f (z)||^d}{|Q_{i_j}(\tilde f)(z)|}\\
&\le A^{q-k}B^{n}\dfrac{||\tilde f (z)||^{kd}}{\bigl (\prod_{j=2}^{k-n+1}|Q_{i_j}(\tilde f)(z)|\bigl )\cdot\prod_{j=1}^{n}|P_{Ij}(\tilde f)(z)|}\\
&\le c_1\dfrac{||\tilde f (z)||^{(k-n+1)nd}}{\prod_{j=1}^{n}|P_{Ij}(\tilde f)(z)|^{k-n+1}},
\end{align*}
where $I=(i_1,...,i_{k+1})$ and $c_1$ is a positive constant, which is chosen common for all $I\in\mathcal A$.
The above inequality implies that
\begin{align}\label{3.2}
\log \prod_{i=1}^q\dfrac{||\tilde f (z)||^d}{|Q_i(\tilde f)(z)|}\le \log c_1+(k-n+1)\log \prod_{j=1}^n\dfrac{||\tilde f (z)||^{d}}{|P_{Ij}(\tilde f)(z)|}.
\end{align}

Now, for a positive integer $L$, we denote by $V_L$ the vector subspace of$\C[x_0,\ldots, x_n]$ which consists of  all homogeneous polynomials of degree $L$ and zero polynomial. We see that $N$ divisible by $d$. Hence, for each $(i)=(i_1,\ldots,i_n)\in\mathbf N_0^n$ with $\sigma (i)=\sum_{s=1}^ni_s\le\frac{N}{d}$, we set
$$W^{I}_{(i)}=\sum_{(j)=(j_1,\ldots ,j_n)\ge (i)}P_{I1}^{j_1}\cdots P_{In}^{j_n}\cdot V_{N-d\sigma (j)}.$$
Then we see that $W^{I}_{(0,\ldots,0)}=V_N$ and $W^{I}_{(i)}\supset W^{I}_{(j)}$ if $(i)<(j)$ (in the sense of lexicographic order). Therefore, $W^{I}_{(i)}$ is a filtration of $V_N$. 
We have the following lemma due to \cite{CZ}.
\begin{lemma}\label{lem3.2}
Let $(i)=(i_1,\ldots ,i_n),(i)'=(i_1',\ldots ,i_n')\in \mathbf N^n_0$. Suppose that $(i')$ follows $(i)$ in the lexicographic ordering and defined
$$ m^{I}_{(i)}=\dim \dfrac{W^{I}_{(i)}}{W^{I}_{(i)'}}.$$
Then, we have $m^I_{(i)}=d^n,$ provided $d \sigma (i)<N-nd$. 
\end{lemma}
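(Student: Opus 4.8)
The plan is to identify each successive quotient of the filtration with a graded piece of the Artinian-up-to-the-cone ring $\C[x_0,\dots,x_n]/\mathfrak a$, where $\mathfrak a=(P_{I1},\dots,P_{In})$, and then to read off its dimension from a Koszul/Hilbert-series computation. Write $P^{(i)}=P_{I1}^{i_1}\cdots P_{In}^{i_n}$. The lexicographic successor of $(i)=(i_1,\dots,i_n)$ is $(i')=(i_1,\dots,i_{n-1},i_n+1)$, and since the hypothesis gives $\sigma(i)<N/d-n$ this successor is still within range, so no boundary effects occur. Crucially $\{(j):(j)\ge(i)\}=\{(i)\}\cup\{(j):(j)\ge(i')\}$, whence $W^I_{(i)}=P^{(i)}V_{N-d\sigma(i)}+W^I_{(i')}$ and the quotient $W^I_{(i)}/W^I_{(i')}$ is exactly the image of $P^{(i)}V_{N-d\sigma(i)}$.

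First I would check that $P_{I1},\dots,P_{In}$ form a regular sequence. By Lemma \ref{lem3.1} one has $\dim\left(\bigcap_{j=1}^nP_{Ij}\right)\le 0$, while any $n$ hypersurfaces in $\P^n(\C)$ meet in dimension $\ge 0$ (projective dimension theorem); hence this intersection is finite and nonempty. Thus $P_{I1},\dots,P_{In}$ cut $\C[x_0,\dots,x_n]$ down to Krull dimension $1=(n+1)-n$, and since $\C[x_0,\dots,x_n]$ is Cohen--Macaulay they form a regular sequence, a complete intersection of degree $d^n$.

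Next, the linear map $\gamma\mapsto[P^{(i)}\gamma]$ from $V_{N-d\sigma(i)}$ onto $W^I_{(i)}/W^I_{(i')}$ kills $\mathfrak a\cap V_{N-d\sigma(i)}$: if $\gamma=P_{Is}\gamma_s$, then $P^{(i)}\gamma=P^{(i)+e_s}\gamma_s$ with $e_s$ the $s$-th standard basis vector, and $(i)+e_s>(i)$ lexicographically forces $(i)+e_s\ge(i')$, so $P^{(i)}\gamma\in W^I_{(i')}$. Therefore the map factors through a surjection $(\C[x_0,\dots,x_n]/\mathfrak a)_{N-d\sigma(i)}\twoheadrightarrow W^I_{(i)}/W^I_{(i')}$. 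By the Koszul resolution the Hilbert series of $\C[x_0,\dots,x_n]/\mathfrak a$ is $(1-\tau^d)^n/(1-\tau)^{n+1}=(1+\tau+\cdots+\tau^{d-1})^n/(1-\tau)$, whose degree-$L$ coefficient equals $d^n$ as soon as $L\ge n(d-1)$; the hypothesis $d\sigma(i)<N-nd$ gives $N-d\sigma(i)>nd\ge n(d-1)$, so the source has dimension exactly $d^n$ and hence $m^I_{(i)}\le d^n$.

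The remaining, and essential, point is injectivity of this surjection, i.e. the implication $P^{(i)}\gamma\in W^I_{(i')}\Rightarrow\gamma\in\mathfrak a$; this is where the regular-sequence structure is genuinely needed and is the step I expect to be the crux. I would prove by induction on $r$ the following: if $P_1,\dots,P_r$ is a regular sequence of forms and a homogeneous $\gamma$ satisfies $P_1^{i_1}\cdots P_r^{i_r}\gamma\in\sum_{(j)>(i)}P_1^{j_1}\cdots P_r^{j_r}\,\C[x_0,\dots,x_n]$ (the sum over $(j)$ lexicographically larger than $(i)$), then $\gamma\in(P_1,\dots,P_r)$. Splitting the right-hand side into its part with $j_1=i_1$, which forces $(j_2,\dots,j_r)>(i_2,\dots,i_r)$ and is divisible by $P_1^{i_1}$, and its part with $j_1>i_1$, divisible by $P_1^{i_1+1}$, one cancels the nonzerodivisor $P_1^{i_1}$ and reduces modulo $P_1$; since $\bar P_2,\dots,\bar P_r$ is again a regular sequence in $\C[x_0,\dots,x_n]/(P_1)$, the relation takes the same shape in $r-1$ forms, and the inductive hypothesis gives $\bar\gamma\in(\bar P_2,\dots,\bar P_r)$, i.e. $\gamma\in(P_1,\dots,P_r)$; the case $r=1$ is the direct cancellation $P_1^{i_1}\gamma=P_1^{i_1+1}\beta$. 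Applying this with $r=n$ shows the surjection is an isomorphism, whence $m^I_{(i)}=\dim(\C[x_0,\dots,x_n]/\mathfrak a)_{N-d\sigma(i)}=d^n$.
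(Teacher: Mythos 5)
Your proof is correct. Note that the paper supplies no proof of this lemma at all --- it is simply quoted from Corvaja--Zannier \cite{CZ} --- and your argument (identify $W^{I}_{(i)}/W^{I}_{(i)'}$ with the degree-$(N-d\sigma(i))$ graded piece of $\C[x_0,\dots,x_n]/(P_{I1},\dots,P_{In})$, derive the regular-sequence property from the zero-dimensionality of $\bigcap_{j}P_{Ij}$ guaranteed by Lemma \ref{lem3.1} together with Cohen--Macaulayness, and read off $d^n$ from the Koszul resolution once $N-d\sigma(i)>n(d-1)$) is essentially the standard proof found in that reference; the only cosmetic point is that your injectivity induction should be phrased for a regular sequence in an arbitrary graded ring rather than in the polynomial ring, since the inductive step descends to $\C[x_0,\dots,x_n]/(P_1)$, where, as you observe, the images of $P_2,\dots,P_r$ remain a regular sequence.
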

We assume that 
$$ V_N=W^{I}_{(i)_1}\supset W^{I}_{(i)_2}\supset\cdots\supset W^{I}_{(i)_K}, $$
where $(i)_s=(i_{1s},...,i_{ns})$, $W^{I}_{(i)_{s+1}}$ follows $W^{I}_{(i)_s}$ in the ordering and $(i)_K=(\frac{N}{d},0,\ldots ,0)$. We see that $K$ is the number of $n$-tuples $(i_1,\ldots,i_n)$ with $i_j\ge 0$ and $i_1+\cdots +i_n\le\frac{N}{d}$. Then we easily estimate that
$$ K =\binom{\frac{N}{d}+n}{n}.$$
We define $m^{I}_s=\dim\frac{W^{I}_{(i)_s}}{W^{I}_{(i)_{s+1}}}$ for all $s=1,\ldots, K-1$ and set $m^{I}_K=1$. 

Let $u=\dim V_N$. From the above filtration, we may choose a basis $\{\psi^{I}_1,\ldots,\psi^{I}_{u}\}$ of $V_N$ such that  
$$\{\psi_{u-(m^{I}_s+\cdots +m^{I}_K)+1},\ldots ,\psi_{u}\}$$
 is a basis of $W^{I}_{(i)_s}$. For each $s\in\{1,\ldots,K\}$ and $l\in\{u-(m^{I}_s+\cdots +m^{I}_K)+1,\ldots, u-(m^{I}_{s+1}+\cdots +m^{I}_K)\}$, we may write
$$ \psi^{I}_l=P_{I1}^{i_{1s}}\ldots P_{In}^{i_{ns}}h_l,\ \text{ where } (i_{1s},\ldots,i_{ks})=(i)_s, h_l\in V^{I}_{N-d\sigma (i)_s}. $$
Then we have
\begin{align*}
|\psi^{I}_l(\tilde f)(z)|&\le |P_{I1} (\tilde f)(z)|^{i_{1s}}\ldots |P_{In} (\tilde f)(z)|^{i_{ks}}|h_l(\tilde f)(z)|\\
& \le c_2|P_{I1} (\tilde f)(z)|^{i_{1s}}\ldots |P_{In} (\tilde f)(z)|^{i_{ks}}||\tilde f(z)||^{N-d\sigma(i)_s}\\
&=c_2\left (\dfrac{|P_{I1} (\tilde f)(z)|}{||\tilde f(z)||^d}\right)^{i_{1s}}\ldots\left (\dfrac{|P_{In} (\tilde f)(z)|}{||\tilde f(z)||^d}\right)^{i_{ks}}||\tilde f (z)||^N,
\end{align*} 
where $c_2$ is a positive constant independently from $l$, $I$, $f$ and $z$. This implies that
\begin{align}\label{3.7}
\begin{split}
\log\prod_{l=1}^{u}|\psi^{I}_l(\tilde f)(z)|&\le\sum_{s=1}^Km^{I}_s\left (i_{1s}\log\dfrac{|P_{I1} (\tilde f)(z)|}{||\tilde f(z)||^d}+\cdots+i_{ns}\log\dfrac{|P_{In} (\tilde f)(z)|}{||\tilde f(z)||^d}\right)\\ 
& \ \ \ +uN\log ||\tilde f (z)||+\log c_2.
\end{split}
\end{align}

We fix $\phi_1,...,\phi_{u}$, a basic of $V_N$, $\psi_s^{I}(\tilde f)=L_s^{I}(\tilde F)$, where $L_s^{I}$ are linear forms and $\tilde F=(\phi_1(\tilde f),\ldots ,\phi_{u}(\tilde f))$ is a reduced representation of a meromorphic mapping $F$. We set
\begin{align*}
b_j^{I}=\sum_{s=1}^{K}m_s^{I}i_{js},\ 1\le j\le k.
\end{align*}
From (\ref{3.7}) we have that
$$\log\prod_{s=1}^{u}|L_{s}^{I}(\tilde F)(z)|\le\log\left(\prod_{j=1}^{n}\biggl (\dfrac{|P_{Ij}(\tilde f)(z)|}{||\tilde f(z)||^d}\biggl )^{b_j^{I}}\right)+uN\log ||\tilde f(z)||+\log c_2.$$
We set $b=\min_{j,I}b_j^{I}$. Because $f$ is algebraically non degenerate over $\C$, $F$ is linearly non degenerate over $\C$. Then there exists an admissible set $\alpha =(\alpha_1,...,\alpha_{u})\in (\mathbb Z_+^m)^u$, with $|\alpha_s|\le s-1$, such that
\begin{align*}
W^{\alpha}(\phi_s(\tilde f)):=\det (\mathcal D^{\alpha_i}(\phi_s(\tilde f)))_{1\le i,s\le u}\not\equiv 0.
\end{align*}
We also have
\begin{align}\label{aux}
\begin{split}
\log \frac{||\tilde f(z)||^{qdb}|W^{\alpha}(\phi_s(\tilde f))(z)|^p}{\prod_{i=1}^{q}|Q_i(\tilde f)(z)|^b}&\le \log\frac{||\tilde f(z)||^{pndb}|W^\alpha(\phi_s(\tilde f))(z)|^p}{\prod_{j=1}^n|P_{Ij}(\tilde f)(z)|^{pb}}+O(1)\\
&\le\log \frac{||\tilde f(z)||^{pd\sum_{j=1}^{n}b_j^{I}}|W^{\alpha}(\phi_s(\tilde f))(z)|^p}{\prod_{j=1}^{n}|P_{Ij}(\tilde f)(z)|^{pb^{I}_j}} +O(1)\\
&\le \log \frac{||\tilde f(z)||^{puN}|W^{\alpha}(\phi_s(\tilde f))(z)|^p}{\prod_{i=1}^{u}|\psi_i^{I}(\tilde f)(z)|^p}+O(1)\\
&\le \log \frac{||\tilde f(z)||^{puN}|W^{\alpha}(\psi^I_s(\tilde f))(z)|^p}{\prod_{i=1}^{u}|\psi_i^{I}(\tilde f)(z)|^p}+O(1),
\end{split}
\end{align}
where $W^{\alpha}(\psi^I_s(\tilde f))=\det (\mathcal D^{\alpha_i}(\psi^I_{s}(\tilde f)))_{1\le i,s\le u}$, $O(1)$ depends only on $N$ and $\{Q_i\}_{i=1}^{q}$.
This inequality implies that
\begin{align}\label{3.6}
\log \frac{||\tilde f(z)||^{qdb-puN}|W^{\alpha}(\phi_s(\tilde f))(z)|^p}{(\prod_{i=1}^{q}|Q_i(\tilde f)(z)|^b)}\le\log\frac{|W^{\alpha}(\phi_s(\tilde f))(z)|^p}{\prod_{i=1}^{u}|\psi_i^{I}(\tilde f)(z)|^p}+O(1),
\end{align}
for all $z\in\C^m$ outside a proper analytic subset of $\C^m$, which is the union of zero sets of functions $Q_i(\tilde f), P_{Ij}(\tilde f)$.

Put $S_{I}=\dfrac{|W^{\alpha}(\phi_s(\tilde f))(z)|}{\prod_{i=1}^{u}|\psi_i^{I}(\tilde f)(z)|}$. Then, there exists a positive constant $K_0$ such that, for each  $z\in \mathbb C^m$,
\begin{align*}
\frac{||\tilde f(z)||^{qdb-puN}|W^{\alpha}(\phi_s(\tilde f))(z)|^p}{\prod_{i=1}^{q}|Q_i(\tilde f)(z)|^b}\le K^p_0.S^p_{I}(z).
\end{align*}
for some $I\subset \{1,...,q\}$ with $\sharp I=k+1$. 

\begin{lemma}\label{new} For $N=(n+1)d+p(n+1)^3I(\epsilon^{-1})$ as in the assumption,
we have 
\begin{align*}
\mathrm{(a)}&\ \ \ \frac{puN}{db}\le (k-n+1)(n+1)+\epsilon,\\
\mathrm{(b)}&\ \ \  u\le e^{n+2}\left (dp(n+1)^2I(\epsilon^{-1})\right )^n.
\end{align*}
\end{lemma}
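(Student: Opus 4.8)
The plan is to prove (a) and (b) separately, each as a binomial estimate clustered around the value $n+1$. Throughout write $M=N/d$, which is a positive integer since $d\mid N$, and recall that $u=\dim V_N=\binom{N+n}{n}=\binom{Md+n}{n}$, while $b=\min_{j,I}b_j^I$ with $b_j^I=\sum_s m_s^I\,i_{js}$.

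For part (a) I would first produce a lower bound for $b$ and an upper bound for $u$. By Lemma \ref{lem3.2} one has $m_s^I=d^n$ whenever $d\sigma((i)_s)<N-nd$, i.e. whenever $\sigma((i)_s)\le M-n-1$. Discarding the remaining (non-negative) boundary terms and using that, for each fixed $j$, the sum $\sum_{\sigma(i)\le M'}i_j$ is symmetric in the coordinates and equals $\binom{M'+n}{n+1}$ (since $\sum_{\sigma(i)\le M'}\sigma(i)=n\binom{M'+n}{n+1}$ by a hockey-stick identity), I obtain $b\ge d^n\binom{M-1}{n+1}$. On the other side, writing $\binom{Md+n}{n}=\tfrac1{n!}\prod_{l=1}^n(Md+l)$ and comparing termwise with $d(M+l)\ge Md+l$ gives the clean bound $u\le d^n\binom{M+n}{n}$. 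Substituting these together with $N=Md$ into $puN/(db)$ and applying the elementary identities $\binom{M+n}{n}/\binom{M+n}{n+1}=(n+1)/M$ and $\binom{M+n}{n+1}/\binom{M-1}{n+1}=\prod_{l=0}^n\frac{M+l}{M-1-l}$, the whole expression collapses to
$$\frac{puN}{db}\le p(n+1)\prod_{l=0}^n\frac{M+l}{M-1-l}.$$
It then remains to show the product is at most $1+\tfrac{\epsilon}{p(n+1)}$. Taking logarithms and using $\log(1+x)\le x$ bounds $\log\prod_{l=0}^n\frac{M+l}{M-1-l}\le\frac{(n+1)^2}{M-1-n}$, and the choice $N=(n+1)d+p(n+1)^3I(\epsilon^{-1})d$ gives exactly $M-1-n=p(n+1)^3I(\epsilon^{-1})$, so this logarithm is at most $\frac{1}{p(n+1)I(\epsilon^{-1})}$. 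Since $I(\epsilon^{-1})>\epsilon^{-1}$, exponentiating and using that $(e^y-1)/y$ is close to $1$ for the small argument $y=\frac{1}{p(n+1)I(\epsilon^{-1})}$ yields $p(n+1)\bigl(\prod-1\bigr)\le\epsilon$, which is (a).

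For part (b) I would bound $u=\binom{N+n}{n}\le\frac{(N+n)^n}{n!}$ and insert the standard inequality $n!\ge(n/e)^n$, giving $u\le e^n\bigl(\tfrac{N+n}{n}\bigr)^n$. It then suffices to prove $\frac{N+n}{n}\le e^{2/n}\,dp(n+1)^2I(\epsilon^{-1})$, since raising to the $n$-th power and multiplying by $e^n$ produces the claimed $u\le e^{n+2}\bigl(dp(n+1)^2I(\epsilon^{-1})\bigr)^n$. Using $e^{2/n}\ge 1+\tfrac2n$, this reduces to the polynomial inequality $(n+1)d+n\le p(n+1)^2I(\epsilon^{-1})d$, which holds because the right-hand side is at least $(n+1)^2d\ge(n+1)d+(n+1)d\ge(n+1)d+n$.

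The main obstacle is the estimate in part (a). The binomial identities force $puN/(db)$ to equal \emph{exactly} $p(n+1)$ in the idealized situation $u=d^n\binom{M+n}{n}$ and $b=d^n\binom{M+n}{n+1}$, so the entire content of (a) is controlling the two multiplicative distortions away from this value: the gap between $\binom{Md+n}{n}$ and $d^n\binom{M+n}{n}$, and the boundary terms omitted in the lower bound for $b$. The delicate point is that a crude handling of the boundary terms leaves a residual constant factor; one must either retain that $m_s^I=d^n$ holds slightly beyond the range guaranteed by Lemma \ref{lem3.2} (the omitted terms are in fact themselves of size $d^n$ apart from the very last index) or otherwise argue that the combined distortion is absorbed by $\epsilon$. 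This is precisely where the polynomial-in-$(n+1)$ size of $N$ and the defining property $I(\epsilon^{-1})>\epsilon^{-1}$ are essential.
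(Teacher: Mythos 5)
Your part (b) is correct and essentially identical to the paper's argument. Part (a), however, has a genuine gap, and it sits exactly at the point you yourself flag as delicate. Your truncation $\sigma(i)\le M-n-1$ (the range actually guaranteed by Lemma \ref{lem3.2} as stated, with strict inequality) gives $b\ge d^n\binom{M-1}{n+1}$ and hence the product $\prod_{l=0}^{n}\frac{M+l}{M-1-l}$, whereas the paper truncates at $\sigma(i)\le M-n$, gets $b\ge d^n\binom{M}{n+1}=\frac{N(N-d)\cdots(N-nd)}{(n+1)!\,d^{\,1-n}d}$, and its product therefore omits the single largest factor $\frac{M}{M-n-1}$. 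That one factor consumes all the available slack: your first-order estimate already gives $p(n+1)\log\prod\le\frac{1}{I(\epsilon^{-1})}$, and since $\frac{1}{I(\epsilon^{-1})}$ can be arbitrarily close to $\epsilon$ from below, there is no room left to absorb the strict excess of $e^y-1$ over $y$; ``$(e^y-1)/y$ is close to $1$'' is not a proof, and in fact your intermediate inequality itself fails. Concretely, take $n=1$, $k=1$ (so $p=1$), $d=1$ and $\epsilon=1/1.999$, so that $I(\epsilon^{-1})=2$, $N=M=18$, $u=19$; your bound gives $b\ge\binom{17}{2}=136$ and
$$\frac{puN}{db}\le\frac{19\cdot 18}{136}\approx 2.5147\ >\ 2.5003\approx p(n+1)+\epsilon,$$
while the product $\frac{18\cdot 19}{17\cdot 16}\approx 1.2574$ already exceeds the target $1+\frac{\epsilon}{p(n+1)}\approx 1.2501$. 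So the chain of inequalities as you have set it up cannot close.

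The paper avoids both problems at once. It keeps the boundary stratum $\sigma(i)=M-n$ in the count (it applies Lemma \ref{lem3.2} with non-strict inequality there), obtaining $b\ge\frac{N(N-d)\cdots(N-nd)}{(n+1)!\,d}$, bounds $\frac{puN}{db}\le p(n+1)\bigl(\frac{N}{N-(n+1)d}\bigr)^n$, and finishes with the polynomial inequality $(1+x)^n\le 1+(n+1)x$ for $0\le x\le\frac{1}{(n+1)^2}$, i.e.\ (\ref{new1.2}). The point of that inequality is that the coefficient $(n+1)$ rather than $n$ in the linear term absorbs all higher-order terms exactly, so the final bound is $p(n+1)\bigl(1+\frac{1}{p(n+1)I(\epsilon^{-1})}\bigr)=p(n+1)+\frac{1}{I(\epsilon^{-1})}\le p(n+1)+\epsilon$, with the comparison to $\epsilon$ reduced to the exact fact $I(\epsilon^{-1})>\epsilon^{-1}$ and nothing left over. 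To repair your argument you need both ingredients: recover the $\sigma(i)=M-n$ terms (or otherwise justify $m^I_{(i)}=d^n$ there), and replace the exponential bound by a polynomial one of this type, so that no second-order error has to be absorbed.
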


\noindent
\textit{Proof of Lemma.} For a real number $x\in[0,\frac{1}{(n+1)^2}]$, we have
\begin{align}\label{new1.2}
\begin{split}
(1+x)^{n}&=1+nx+\sum_{i=2}^n\binom{n}{i}x^{i}\le 1+nx+\sum_{i=1}^2\frac{n^{i}}{i!(n+1)^{2i-2}}x\\
&=1+nx+\sum_{i=2}^n\frac{1}{i!}x\le 1+(n+1)x.
\end{split}
\end{align}
We also note that
\begin{align}\label{new1.3}
\frac{(n+1)d}{N-(n+1)d}=\frac{(n+1)d}{p(n+1)^3I(\epsilon^{-1})d}\le \frac{1}{(n+1)^2}.
\end{align}

Now, we have the following estimates. First, 
$$u=\binom{N+n}{n}=\frac{(N+1)\cdots (N+n)}{1\cdots n}.$$
Second, since the number of nonnegative integer $t$-tuples with summation $\le T$ is equal to the number of nonnegative integer $(t+1)$-tuples with summation exactly equal $T\in \Z$, which is $\bigl (^{T+t}_{\ \ t}\bigl )$, since the sum below is independent of $j$, we have that
\begin{align*}
b_{j}^{I}=&\sum_{\sigma (i)\le N/d}m^I_{(i)}i_{j}\ge\sum_{\sigma (i)\le N/d-n}m^I_{(i)}i_{j}\\ 
=&\sum_{\sigma (i)\le N/d-n}d^ni_j=\frac{d^n}{n+1}\sum_{\sigma (i)\le N/d-n}\sum_{j=1}^{n+1}i_j\\
=&\frac{d^n}{n+1}\sum_{\sigma (i)\le N/d-n}\left (\frac{N}{d}-n\right )=\frac{d^n}{(n+1)}\left (\frac{N}{d}-n\right)\binom{N/d}{n}\\
&=\frac{d^n(N/d)(N/d-1)\cdots (N/d-n-1)(N/d-n)}{1\cdots (n+1)d}\\
&=\frac{N(N-d)\cdots (N-(n-1)d)(N-nd)}{(n+1)!d}.
\end{align*}
This implies that 
\begin{align*}
\frac{puN}{db}\le &p(n+1)\frac{(N+1)\cdots (N+n)}{(N-d)\cdots (N-nd)}=p(n+1)\prod_{j=1}^n\frac{N+j}{N-(n+1)d+jd}\\
\le & p(n+1)\left (\frac{N}{N-(n+1)d}\right )^n\le p(n+1)\left (1+\frac{(n+1)d}{N-(n+1)d}\right )^n\\
\le &p(n+1)\left (1+(n+1)\frac{(n+1)d}{N-(n+1)d}\right)\ \ \ (*)\\
\le &p(n+1)\left (1+(n+1)\frac{(n+1)d}{p(n+1)^3I(\epsilon^{-1})d}\right )\\
\le &p(n+1)\left (1+\frac{1}{p(n+1)\epsilon^{-1}}\right )=p(n+1)+\epsilon,
\end{align*}
where the inequality (*) comes from (\ref{new1.2}) and (\ref{new1.3}).
Also, one can be estimated that
\begin{align*}
u&=\binom{N+n}{n}\le e^n\left (1+\frac{N}{n}\right )^n\le e^n\left (\frac{n+(n+1)d}{n}+\frac{p(n+1)^3I(\epsilon^{-1})d}{n}\right )^n\\
&=e^n(p(n+1)^2I(\epsilon^{-1})d)^n\left (1+\frac{1}{n}+\frac{n+(n+1)d}{np(n+1)^2I(\epsilon^{-1})d}\right)^n\\
&\le \left (edp(n+1)^3I(\epsilon^{-1})\right )^n\cdot\left (1+\frac{1}{n}+\frac{2}{n(n+1)}\right)^n\\
&\le \left (edp(n+1)^3I(\epsilon^{-1})\right )^n\cdot\left (1+\frac{2}{n}\right)^n\le e^{n+2}\left (dp(n+1)^2I(\epsilon^{-1})\right )^n.
\end{align*}
The lemma is proved.

\begin{claim}\label{cl3.10}
$\bigl (b\sum_{j=1}^q\nu_{Q_j(\tilde f)}-p\nu_{W^{\alpha}(\phi_s(\tilde f))}\bigl )\le b\sum_{i=1}^q\min\{u-1,\nu_{Q_j(\tilde f)}\}$.
\end{claim}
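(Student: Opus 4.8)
The plan is to prove the asserted inequality of divisors pointwise. Since both sides are divisors on $\B^m(1)$, it suffices to compare multiplicities at an arbitrary point $z_0$, and because $\nu_{Q_j(\tilde f)}-\min\{u-1,\nu_{Q_j(\tilde f)}\}=\max\{0,\nu_{Q_j(\tilde f)}-(u-1)\}$, the claim is equivalent to the lower bound
$$p\,\nu_{W^{\alpha}(\phi_s(\tilde f))}(z_0)\ge b\sum_{j=1}^q\max\{0,\nu_{Q_j(\tilde f)}(z_0)-(u-1)\}.$$
I may assume $\tilde f(z_0)\neq 0$, since $\{\tilde f=0\}$ has codimension $\ge 2$ and carries no divisor.

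First I would localise the vanishing. Applying Lemma \ref{3.2} to each $(k+1)$-element subset $R$ of $\{Q_i\}$ (which satisfies $\bigcap_{i\in R}Q_i=\emptyset$ by $k$-subgeneral position) and using $\tilde f(z_0)\neq 0$, I find $\max_{i\in R}|Q_i(\tilde f)(z_0)|>0$ for every such $R$; hence at most $k$ of the functions $Q_i(\tilde f)$ vanish at $z_0$. Relabel so that these are $Q_{i_1},\dots,Q_{i_{\ell}}$ with $\ell\le k$ and $\mu_1:=\nu_{Q_{i_1}(\tilde f)}(z_0)\ge\cdots\ge\mu_{\ell}:=\nu_{Q_{i_{\ell}}(\tilde f)}(z_0)>0$, and take $I=(i_1,\dots,i_{k+1})\in\mathcal A$ to contain all of them; only these $\ell$ terms contribute on the right. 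Because the change of basis from $\{\phi_s\}$ to $\{\psi^I_s\}$ is given by a constant invertible matrix, $\nu_{W^{\alpha}(\phi_s(\tilde f))}(z_0)=\nu_{W^{\alpha}(\psi^I_s(\tilde f))}(z_0)$.

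Next I would bound $\nu_{W^{\alpha}(\psi^I_s(\tilde f))}(z_0)$ from below. Writing $\gamma_l:=\nu_{\psi^I_l(\tilde f)}(z_0)$ and expanding the generalized Wronskian determinant along a transversal coordinate, every term has vanishing order at least $\sum_l\gamma_l-\sum_l|\alpha_l|$, so with $|\alpha_l|\le l-1$ I get $\nu_{W^{\alpha}(\psi^I_s(\tilde f))}(z_0)\ge\sum_{l=1}^u\gamma_l-\binom{u}{2}$. From the filtration factorisation $\psi^I_l=P_{I1}^{i_{1s}}\cdots P_{In}^{i_{ns}}h_l$ for $l$ in the $s$-th block one has $\gamma_l\ge\sum_{t=1}^n i_{ts}\lambda_t$ with $\lambda_t:=\nu_{P_{It}(\tilde f)}(z_0)$, and summing over $l$ (using $\sum_s m^I_s i_{ts}=b^I_t$ and $b^I_t\ge b$) gives $\sum_l\gamma_l\ge\sum_{t=1}^n b^I_t\lambda_t\ge b\sum_{t=1}^n\lambda_t$. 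The decisive point is to bound $\sum_t\lambda_t$ below by the $\mu_j$. Since $P_{I1}=Q_{i_1}$ we have $\lambda_1=\mu_1$, while for $t\ge 2$ the hypersurface $P_{It}$ is a linear combination of $Q_{i_2},\dots,Q_{i_{k-n+t}}$, whence $\lambda_t\ge\mu_{k-n+t}$ whenever $k-n+t\le\ell$. Thus $\sum_t\lambda_t\ge\mu_1+\sum_{k-n+2\le j\le\ell}\mu_j$, and using the decreasing ordering together with $p=k-n+1$ I obtain the key combinatorial inequality
$$p\Bigl(\mu_1+\sum_{k-n+2\le j\le\ell}\mu_j\Bigr)\ge\sum_{j=1}^{\ell}\mu_j,$$
because the surplus $(p-1)\mu_1=(k-n)\mu_1$ dominates the $k-n$ uncaptured middle terms $\mu_2,\dots,\mu_{k-n+1}$ (each $\le\mu_1$), while the factor $p\ge1$ covers the captured tail.

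The main obstacle is precisely this combinatorial matching and the subsequent absorption of the Wronskian defect into the truncation level. Combining the estimates yields $p\,\nu_{W^{\alpha}(\phi_s(\tilde f))}(z_0)\ge b\sum_{j=1}^{\ell}\mu_j-p\binom{u}{2}$, so the claim follows once $b\sum_{j=1}^{\ell}\min\{u-1,\mu_j\}\ge p\binom{u}{2}$. In the only nontrivial case some $\mu_{j_0}>u-1$, so the left side is at least $b(u-1)$, and it remains to verify $b\ge pu/2$; this is guaranteed by the explicit value $b=\frac{N(N-d)\cdots(N-nd)}{(n+1)!\,d}$ computed in Lemma \ref{new} and the size of $N$. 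I expect the bookkeeping in the combinatorial step — ensuring the replacing hypersurfaces $P_{It}$ inherit enough vanishing from the subgeneral family and that exactly the factor $p=k-n+1$ emerges from the ordering — to be the delicate part, the remaining ingredients being the standard Wronskian order estimate and a numerical comparison.
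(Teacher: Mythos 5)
Your proposal is correct and shares the paper's structural skeleton: the reduction to a pointwise lower bound on $p\,\nu_{W^{\alpha}}$, the observation (via $k$-subgeneral position) that at most $k$ of the $Q_i(\tilde f)$ vanish at $z_0$, the passage to the replacing hypersurfaces with $\nu_{P_{I1}}=\nu_{Q_{i_1}}$ and $\nu_{P_{It}}\ge\nu_{Q_{i_{k-n+t}}}$, the filtration bookkeeping $\sum_s m^I_s i_{ts}=b^I_t\ge b$, and the combinatorial absorption of the $k-n$ uncaptured multiplicities $\mu_2,\dots,\mu_{k-n+1}$ into $p\mu_1$ (which you spell out more explicitly than the paper does). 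The one genuine divergence is the Wronskian step: the paper uses the truncated bound $\nu_{W^{\alpha}(\psi^I_s(\tilde f))}(z_0)\ge\sum_l\max\{\gamma_l-(u-1),0\}$, valid because $|\alpha_l|\le l-1\le u-1$, and then carries the operation $\max\{\cdot-(u-1),0\}$ through every subsequent inequality, so the truncation level $u-1$ appears for free and no further numerical input is needed. You instead use the cruder bound $\nu_{W^{\alpha}}\ge\sum_l\gamma_l-\binom{u}{2}$ and must then absorb the deficit $p\binom{u}{2}$ into $b\sum_j\min\{u-1,\mu_j\}$, which in the only nontrivial case reduces to $b\ge pu/2$. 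That inequality does hold: the proof of Lemma \ref{new} gives $b\ge \frac{puN}{d(p(n+1)+1)}$ (using $I(\epsilon^{-1})\ge 1$), and $2N\ge d(p(n+1)+1)$ since $N\ge p(n+1)^3d$ --- but note that $b$ is only bounded \emph{below} by $\frac{N(N-d)\cdots(N-nd)}{(n+1)!\,d}$, not equal to it, and this verification should be written out rather than asserted. The trade-off is that your route ties the claim to the specific size of $N$, whereas the paper's truncated Wronskian estimate decouples it from the numerics entirely; both arguments are sound.
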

Fix $z\in \C^m$, we may assume that
$$\nu_{Q_1(\tilde f)}(z)\ge\cdots
\ge\nu_{Q_{t}(\tilde f)}(z)>0=\nu_{Q_{t+1}(\tilde f)}(z)=\cdots =\nu_{Q_{q}(\tilde f)}(z),$$
where $0\le t\le k,$ ($t$ may be zero). We denote by $\{P_1,\ldots ,P_{n+1}\}$, the family of hypersurfaces corresponding to the family $\{Q_1,...,Q_{k+1}\}$ as in the Lemma \ref{lem3.1}. Then we will see that
\begin{align*}
\nu_{P_1}(z)&=\nu_{Q_1}(z),\\ 
\nu_{P_i}(z)&\ge\nu_{Q_{k-n+i}}(z). 
\end{align*}
Put $I =(1,...,n+1)$ and $M=u-1$. We have
$$
p\nu_{W^{\alpha}(\phi_s(\tilde f))}(z)=p\nu_{W^\alpha(\psi_s^{I}(\tilde f))}(z)
\ge p\sum_{s=1}^{u}\max\{\nu_{\psi_s^{I}(\tilde f)}(z)-M,0\}.
$$
For $\psi=P_1^{i_1}...P_n^{i_n}h\in\{\psi_s^{I}\}_{s=1}^{u}$, we have
$$
\psi (\tilde f)(z)=P_{1}^{i_1}(\tilde f)(z)\ldots P_n^{i_n}(\tilde f)(z).h(\tilde f)(z).
$$
Hence
\begin{align*}
\max \{ \nu_{\psi (\tilde f)}(z)-M,0\}
\ge & \sum_{t=1}^{n}\max \{\nu_{(P_t^{i_t}(\tilde f)}(z)-M,0 \}\\
\ge & \sum_{t=1}^{n}i_t\max \{\nu_{P_t(\tilde f)}(z)-M,0\}.
\end{align*}
This implies that
\begin{align*}
p\sum_{s=1}^{u}\max&\{\nu_{\psi_s(\tilde f)}(z)-M,0\}\ge p\sum_{(i)}m_{(i)}^{I}\sum_{t=1}^{k}i_t\max\{\nu_{P_t(\tilde f)}(z)-M,0\}\\
&=p\sum_{t=1}^{n}b_t^{I}\max\{\nu_{P_t(\tilde f)}(z)-M,0\}\ge p\sum_{t=1}^nb\max\{\nu_{P_t(\tilde f)}(z)-M,0\}\\
&\ge\sum_{t=1}^{K}b\max\{\nu_{Q_t(\tilde f)}(z)-M,0\}=\sum_{i=1}^{q}b\max\{\nu_{Q_i(\tilde f)}(z)-M,0\}\\
&=b\sum_{i=1}^{q}\max\{(f^*Q_i)(z)-M,0\}=b\sum_{i=1}^{q}\bigl (\nu_{Q_i(\tilde f)}(z)-\min\{u-1,\nu_{Q_i(\tilde f)}(z)\}\bigl ).
\end{align*}
Hence 
\begin{align*}
b\sum_{i=1}^{q}\nu_{Q_i(\tilde f)}(z)-p\nu_{W^{\alpha}(\phi_i(\tilde f))}(z)&\le b\sum_{i=1}^{q}\min\{u-1,\nu_{Q_i(\tilde f)}\}.
\end{align*}
The claim is proved.

Assume that 
$$ \rho\Omega_f+\dfrac{\sqrt{-1}}{2\pi}\partial\bar\partial\log h^2\ge \ric\omega.$$
We now suppose that
$$ \sum_{j=1}^q\delta^{[u-1]}_f(Q_j)> \frac{puN}{db}+\dfrac{\rho pu(u-1)}{db}.$$
Then, for each $j\in\{1,\ldots ,q\},$ there exist constants $\eta_j>0$ and continuous plurisubharmonic function $\tilde u_j$ such that 
$e^{\tilde u_j}|\varphi_j|\le ||\tilde f||^{d\eta_j},$ where $\varphi_j$ is a holomorphic function with $\nu_{\varphi_j}=\min\{u-1,f^*Q_j\}$ and
$$ q-\sum_{j=1}^q\eta_j>  \frac{puN}{db}+\dfrac{\rho pu(u-1)}{db}.$$
Put $u_j=\tilde u_j+\log |\varphi_j|$, then $u_j$ is a plurisubharmonic and
$$ e^{u_j}\le ||\tilde f||^{d\eta_j},\ j=1,\ldots ,q. $$
Let
$$v (z)=\log\left |(z^{\alpha_1+\cdots+\alpha_u})^p\dfrac{(W^{\alpha}(\phi_s(\tilde f))(z))^p}{(\prod_{i=1}^{q}Q_i(\tilde f)(z))^b}\right |+b\sum_{j=1}^q u_j(z).$$
Therefore, we have the following current inequality
\begin{align*}
2dd^c[v]&\ge p[\nu_{W^{\alpha}(\phi_i(\tilde f))}]-b\sum_{j=1}^q[\nu_{Q_i(\tilde f)}]+\sum_{j=1}^q2dd^c[u_j]\\
&=p[\nu_{W^{\alpha}(\phi_i(\tilde f))}]-b\sum_{j=1}^q[\nu_{Q_i(\tilde f)}]+b\sum_{j=1}^q[\min\{u-1,\nu_{Q_i(\tilde f)}\}]\ge 0.
\end{align*}
This implies that $v$ is a plurisubharmonic function on $\B^m(1)$.

On the other hand, by the growth condition of $f$, there exists a continuous plurisubharmonic function $\omega\not\equiv\infty$ on $\B^m(1)$ such that
\begin{align*}
e^\omega dV\le ||\tilde f||^{2\rho}v_m
\end{align*}
Set
$$t=\dfrac{2\rho}{db(q-\frac{puN}{db}-\sum_{j=1}^q\eta_j)}>0$$ 
and 
$$\lambda (z)=(z^{\alpha_1+\cdots +\alpha_u})^p\dfrac{\left (W^{\alpha}(\phi_i(\tilde f))\right )^p(z)}{Q_1^{b}(\tilde f)(z)\ldots Q_q^{b}(\tilde f)(z)}.$$ 
We see that
$$ \frac{u(u-1)p}{2}t< \frac{u(u-1)p}{2}\cdot\dfrac{2\rho}{2\rho u(u-1)p}=1,$$
and the function $\zeta=\omega+ tv$ is plurisubharmonic on the K\"{a}hler manifold $M$. Choose a position number $\delta$ such that $0<\frac{u(u-1)pt}{2}<\delta<1.$
Then, we have
\begin{align*}
e^\zeta dV&=e^{\omega +tv}dV\le e^{tv}||\tilde f||^{2\rho}v_m=|\lambda|^{t}(\prod_{j=1}^qe^{tbu_j})||\tilde f||^{2\rho}v_m\\
&\le|\lambda|^t ||\tilde f||^{2\rho+\sum_{j=1}^qbdt\eta_j}v_m=|\lambda|^t ||\tilde f||^{dbt(q- \frac{puN}{db})}v_m.
\end{align*}
Integrating both sides of the above inequality over $\B^m(1),$  we have
\begin{align}\label{3.11}
\begin{split}
\int_{\B^m(1)}e^\zeta dV&\le \int_{\B^m(1)}|\lambda|^t ||\tilde f||^{t(qdb- puN)}v_m.\\
&=2m\int_0^1r^{2m-1}\left (\int_{S(r)}\bigl (|\lambda| ||\tilde f||^{qdb- puN}\bigl )^t\sigma_m\right )dr\\
&\le 2m\int_0^1r^{2m-1}\left (\int_{S(r)}\sum_{\overset{\sharp I=k+1}{I\subset\{1,...,q\}}}\bigl |(z^{\alpha_1+\cdots +\alpha_u})K_0S_{I}\bigl |^{pt}\sigma_m\right )dr.
\end{split}
\end{align}

(a) We first consider the case where
$$ \lim\limits_{r\rightarrow 1}\sup\dfrac{T_f(r,r_0)}{\log 1/(1-r)}<\infty.$$
We note that $(\sum_{i=1}^u|\alpha_i|)pt\le \frac{u(u-1)p}{2}t<\delta<1$. Then by Proposition \ref{pro2.2}, there exists a positive constant $K_1$ such that, for every $0<r_0<r<r'<1,$ we have
\begin{align*}
\int_{S(r)}\left |(z^{\alpha_1+\cdots +\alpha_u})K_0S_I(z)\right |^{pt}\sigma_m\le K_1\left (\dfrac{r'^{2m-1}}{r'-r}dT_f(r',r_0)\right )^{\delta}.
\end{align*}
Choosing $r'=r+\dfrac{1-r}{eT_f(r,r_0)}$, we get
$$ T_f(r',r_0)\le 2T_f(r,r_0)$$
outside a subset $E\subset [0,1]$ with $\int_E\frac{dr}{1-r}<+\infty$. Hence, the above inequality implies that
\begin{align*}
\sum_{\overset{\sharp I=k+1}{I\subset\{1,...,q\}}}\int_{S(r)}\left |(z^{\alpha_1+\cdots +\alpha_u})K_0S_I(z)\right |^{pt}\sigma_m\le \dfrac{K}{(1-r)^\delta}\left (\log\frac{1}{1-r}\right )^{\delta}
\end{align*}
for all $z$ outside $E$, where $K$ is a some positive constant. By choosing $K$ large enough, we may assume that the above inequality holds for all $z\in \B^m(1)$.
Then, the inequality (\ref{3.11}) yields that
\begin{align*}
\int_{\B^m(1)}e^\zeta dV&\le 2m\int_0^1r^{2m-1}\dfrac{K}{(1-r)^\delta}\left (\log\frac{1}{1-r}\right )^{\delta}dr< +\infty
\end{align*}
This contradicts the results of S.T. Yau \cite{Y76} and L. Karp \cite{K82}. 

Hence, we must have
$$\sum_{j=1}^q\delta^{[u-1]}_{f}(Q_j)\le \frac{puN}{db}+\dfrac{p\rho u(u-1)}{db}.$$
Since $p\le b$, the above inequality implies that
$$\sum_{j=1}^q\delta^{[u-1]}_{f}(Q_j)\le (k-n+1)(n+1)+\epsilon+\dfrac{\rho u(u-1)}{d}.  $$
The theorem is proved in this case.

(b) We now consider the remaining case where 
$$ \lim\limits_{r\rightarrow 1}\sup\dfrac{T(r,r_0)}{\log 1/(1-r)}= \infty .$$
Repeating the argument in the proof of Theorem \ref{1.1}, we only need to prove the following theorem.
\begin{theorem}\label{5.3}
With the assumption of Theorem \ref{1.1} and suppose that $M=\B^m(R_0)$.
 Then, we have
$$(q-p(n+1)-\epsilon)T_f(r,r_0)\le \sum_{i=1}^{q}\dfrac{1}{d}N^{[u-1]}_{Q_i(\tilde f)}(r)+S(r),$$
where $S(r)\le K(\log^+\frac{1}{R_0-r}+\log^+T_f(r,r_0))$
for all $0<r_0<r<R_0$ outside a set $E\subset [0,R_0]$ with $\int_E\frac{dt}{R_0-t}<\infty.$ 
\end{theorem}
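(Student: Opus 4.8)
The plan is to recognize Theorem \ref{5.3} as the second main theorem underlying case (b) and to derive it by \emph{integrating} the pointwise estimate (\ref{3.6}) over the sphere $S(r)$, rather than integrating its exponential over the ball as in case (a). All of the algebraic input---the replacing hypersurfaces of Lemma \ref{lem3.1}, the filtration $W^I_{(i)}$, the lower bound for $b^I_j$, Claim \ref{cl3.10}, and the numerical estimate Lemma \ref{new}(a)---is already available and will be reused verbatim; only the analytic extraction changes.

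First I would apply $\int_{S(r)}(\cdot)\sigma_m$ to both sides of (\ref{3.6}). On the left, Jensen's formula identifies $\int_{S(r)}\log\|\tilde f\|\sigma_m=T_f(r,r_0)+O(1)$, $\int_{S(r)}\log|Q_i(\tilde f)|\sigma_m=N_{Q_i(\tilde f)}(r,r_0)+O(1)$, and $\int_{S(r)}\log|W^\alpha(\phi_s(\tilde f))|\sigma_m=N_{W^\alpha(\phi_s(\tilde f))}(r,r_0)+O(1)$, so the left side becomes
$$(qdb-puN)T_f(r,r_0)+pN_{W^\alpha(\phi_s(\tilde f))}(r,r_0)-b\sum_{i=1}^qN_{Q_i(\tilde f)}(r,r_0)+O(1).$$
On the right I must bound $p\int_{S(r)}\log S_I\sigma_m$. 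Since the selected $I$ varies with $z$ while there are only finitely many index sets, I replace $S_I$ by $\sum_{\sharp I=k+1}S_I$, choose exponents with $0<pt\cdot(\sum_s|\alpha_s|)<\delta<1$, and use the concavity of $\log$ together with the subadditivity of $x\mapsto x^{pt}$ (valid as $pt<1$) to pass the $pt$-th power inside the finite sum. Proposition \ref{pro2.2}, applied to the linearly independent forms $L^I_s$ with $\psi^I_s(\tilde f)=L^I_s(\tilde F)$, then bounds each $\int_{S(r)}|z^{\alpha_1+\cdots+\alpha_u}S_I|^{pt}\sigma_m$ by $K(\tfrac{R^{2m-1}}{R-r}T_F(R,r_0))^{\delta}$, and Lemma \ref{3.3b} replaces $T_F$ by $dT_f$. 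The monomial prefactor contributes only $\int_{S(r)}\log|z^{\alpha_1+\cdots+\alpha_u}|\sigma_m=O(1)$, so altogether $p\int_{S(r)}\log S_I\sigma_m\le p\,S(r)+O(1)$ of the required shape, once the standard calculus lemma (taking $R=r+\tfrac{R_0-r}{eT_f(r,r_0)}$, whence $T_f(R,r_0)\le 2T_f(r,r_0)$ off an exceptional set $E$) converts $T_f(R,r_0)$ into $T_f(r,r_0)$.

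Combining the two sides and transposing the counting functions gives
$$(qdb-puN)T_f(r,r_0)\le b\sum_{i=1}^qN_{Q_i(\tilde f)}(r,r_0)-pN_{W^\alpha(\phi_s(\tilde f))}(r,r_0)+p\,S(r)+O(1),$$
and the integrated form of Claim \ref{cl3.10}, namely $b\sum_i\bigl(N_{Q_i(\tilde f)}-N^{[u-1]}_{Q_i(\tilde f)}\bigr)\le pN_{W^\alpha(\phi_s(\tilde f))}$, collapses the first two terms on the right into $b\sum_iN^{[u-1]}_{Q_i(\tilde f)}(r,r_0)$. Dividing by $bd$, invoking Lemma \ref{new}(a) to get $\tfrac{puN}{db}\le p(n+1)+\epsilon$, and using $p\le b$ (so $\tfrac{p}{bd}\le\tfrac1d$) to absorb the error into $S(r)$ yields exactly the claimed inequality. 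I expect the main obstacle to be the bookkeeping in the right-hand estimate: choosing $t$ and $\delta$ compatibly with the constraint $0<pt\,\sum_s|\alpha_s|<\delta<1$ of Proposition \ref{pro2.2}, justifying the interchange of the $z$-dependent choice of $I$ with the integral through the finite sum and subadditivity, and running the calculus lemma so that the final error has the stated form $S(r)\le K(\log^+\tfrac{1}{R_0-r}+\log^+T_f(r,r_0))$ outside a set $E$ with $\int_E\tfrac{dt}{R_0-t}<\infty$.
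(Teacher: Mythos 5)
Your proposal is correct and follows essentially the same route as the paper: the paper likewise applies Proposition \ref{pro2.2} to the finitely many $S_I$ (summed over all index sets $I$ with $\sharp I=k+1$), uses the concavity of the logarithm to pass from the $t$-th power integral to the log-integral estimate (\ref{5.4}), converts via Jensen's formula to the counting-function inequality (\ref{3.14}), and then concludes with the integrated Claim \ref{cl3.10}, Lemma \ref{new}(a), and the calculus lemma with $R=r+\frac{1-r}{eT_f(r,r_0)}$. The only differences are cosmetic (you integrate the pointwise inequality (\ref{3.6}) first and then bound the $S_I$ term, whereas the paper states the integral bound first and then takes logarithms), so no further comment is needed.
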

\begin{proof}
Repeating the above argument, we have
$$\int_{S(r)}\left |(z^{\alpha_1+\cdots +\alpha_u})^p\frac{||\tilde f(z)||^{qdb-puN}|W^{\alpha}(\phi_s(\tilde f))(z)|^p}{\prod_{i=1}^{q}|Q_i(\tilde f)(z)|^b}\right |^{t}\sigma_m\le K_1\left (\dfrac{R^{2m-1}}{R-r}dT_f(R,r_0)\right )^{\delta}$$
for every $0<r_0<r<R<R_0$. Using the concativity of the logarithmic function, we have
\begin{align}\label{5.4}
\begin{split}
p\int_{S(r)}&\log |(z^{\alpha_1+\cdots +\alpha_u})|\sigma_m+(qdb-puN)\int_{S(r)}\log ||\tilde f||\sigma_m+p\int_{S(r)}\log |W^{\alpha}(\phi_s(\tilde f))|\sigma_m\\ 
&-b\sum_{j=1}^q \int_{S(r)}\log |Q_j(\tilde f)|\sigma_m\le K\left (\log^+\dfrac{1}{R_0-r}+\log^+T_f(R,r_0)\right )
\end{split}
\end{align}
for some positive constant $K$. By the Jensen formula, this inequality implies that
\begin{align}\label{3.14}
\begin{split}
(qdb-puN)T_f(r,r_0)&+pN_{W^{\alpha}(\phi_s(\tilde f))}(r)-b\sum_{i=1}^qN_{Q_i(\tilde f)}(r)\\
&\le K\left (\log^+\dfrac{1}{R_0-r}+\log^+T_f(R,r_0)\right )+O(1).
\end{split}
\end{align}
From Claim \ref{cl3.10}, we have 
$$ b\sum_{i=1}^qN_{Q_i(\tilde f)}(r)-pN_{W^{\alpha}(\phi_s(\tilde f))}(r)\le\sum_{i=1}^qN^{[u-1]}_{Q_i(\tilde f)}(r).$$
Combining this estimate and (\ref{3.14}), we get
\begin{align*}
\left (q-\frac{puN}{db}\right)T_f(r,r_0)&\le\sum_{i=1}^{q}\frac{1}{d}N^{[u-1]}_{Q_i(\tilde f)}(r)+K\left (\log^+\dfrac{1}{R_0-r}+\log^+T_f(R,r_0)\right )+O(1).
\end{align*}
Since $\frac{puN}{db}\le p(n+1)+\epsilon$, the above inequality  implies that
\begin{align*}
(q-p(n+1)-\epsilon) T_f(r,r_0)&\le\sum_{i=1}^{q}\frac{1}{d}N^{[u-1]}_{Q_i(\tilde f)}(r)+K\left (\log^+\dfrac{1}{R_0-r}+\log^+T_f(R,r_0)\right )+O(1).
\end{align*}
Choosing $R=r+\dfrac{1-r}{eT_f(r,r_0)}$, we get
$$ T_f(R,r_0)\le 2T_f(r,r_0)$$
outside a subset $E\subset [0,1]$ with $\int_E\frac{dr}{1-r}<+\infty$. 
Thus
$$ (q-p(n+1)-\epsilon) T_f(r,r_0)\le\sum_{i=1}^{q}\frac{1}{d}N^{[u-1]}_{Q_i(\tilde f)}(r)+K\left (\log^+\dfrac{1}{R_0-r}+\log^+T(r,r_0)\right )+O(1). $$
This implies that
$$\sum_{j=1}^q\delta^{[u-1]}_{f}(Q_j)\le\sum_{j=1}^q\delta^{[u-1]}_{f,*}(Q_j)\le p(n+1)+\epsilon.$$
The theorem is proved in this case.
\end{proof}

\section{Value distribution of the Gauss map of a complete regular submanifold of $\C^m$}

Let $M$ be a connected complex manifold of dimension $m$. Let 
$$f = (f_1,\ldots , f_n) : M \rightarrow \C^n$$
be a regular submanifold of $\C^n$; namely, $f$ be a holomorphic map of $M$ into $\C^n$ such that $\rank d_pf = \dim M$ for every point 
$p\in M.$ We assign each point $p\in M$ to the tangent space $T_p(M)$ of $M$ at $p$ which may be considered as an $m$-dimensional linear subspace of $T_{f(p)}(\C^n)$. Also, each tangent space $T_p(\C^n)$ can be identified with $T_0(\C^n)= \C^n$ by a parallel translation. Hence, each $T_p(M)$ is corresponded to a point $G(p)$ in the complex Grassmannian manifold $G(m,n)$ of all $m$-dimensional linear subspaces of $\C^n$.

\begin{definition}
The map $G : p\in M \mapsto G(p)\in G(m,n)$ is called the Gauss map of the map $f : M \rightarrow \C^n$. 
\end{definition}
The space $G(m,n)$ is canonically embedded in $\P^N(\C)=\P(\bigwedge^m\C^n)$, where $N =\binom{n}{m}-1$. Then we may identify the Gauss map $G$ with a holomorphic mapping of $M$ into $\P^N(\C)$ given as follows: taking holomorphic local coordinates $(z_1,\ldots ,z_n)$ defined on an open set $U$, we consider the map 
$$\bigwedge := D_1f\wedge\cdots\wedge D_nf: U\rightarrow\bigwedge^m\C^n\setminus\{0\},$$
where 
$$D_if = (\dfrac{\partial f_1}{\partial z_i},\cdots ,\dfrac{\partial f_n}{\partial z_i}).$$
Then, locally we have
$$G = \pi\circ\bigwedge,$$
where $\pi : \C^{N+1} \setminus\{0\}\rightarrow\P^N(\C)$ is the canonical projection map. A regular submanifold $M$ of $\C^m$ is considered 
as a K\"{a}hler manifold with the metric $\omega$ induced from the standard flat metric on $\C^m$. We denote by $dV$ the volume form on $M$. For arbitrarily holomorphic coordinates $z_1,\ldots ,z_m,$ we see that
$$dV =|\bigwedge|^2\left (\sqrt{-1}{2}\right )^mdz_1\wedge d\bar{z_1}\wedge\cdots\wedge dz_m\wedge dz_m,$$ 
where
$$|\bigwedge|^2=\sum_{1\le i_1<\cdots <i_m\le n}\dfrac{\partial(f_{i_1},...,f_{i_m})}{\partial(z_1,...,z_m)}^2.$$
Therefore, for a regular submanifold $f : M \rightarrow \C^m$, the Gauss map $G : M \rightarrow \P^N(\C)$ satisfies the following growth condition
$$\Omega_G + dd^c \log h^2 = dd^c \log |\bigwedge|^2 = \ric (\omega),$$
where $h = 1$.
Then Theorem \ref{1.1} immediately gives us the following.
\begin{theorem}\label{6.1.}
Let $M$ be a complex manifold of dimension $m$ such that the universal covering of $M$ is biholomorphic to a ball $\B^m(R_0)\ (0< R_0\le +\infty)$ 
in $\C^m.$ Let $f :M\rightarrow\C^n$ be a complete regular submanifold.  Assume that the Gauss map $G: M\rightarrow \P^N(\C)$ is algebraically non-degenerate, where $N=\binom{n}{m}-1$. Let $Q_1,\ldots ,Q_q$ be $q$ hypersurfaces of degree $d_j \ (1\leq j\leq q)$ in $k$-subgeneral position in $\P^N(\C)$. Let $d$ be the least common multiple of $d_i$'s, i.e., $d =l.c.m.\{d_1,\ldots ,d_q\}$. Then, for every $\epsilon>0$ we have
$$\sum_{i=1}^q\delta_G^{[u-1]}(Q_i) \le p(N+1)+\epsilon+\frac{\rho u(u-1)}{d},$$
where $p=k-N+1$, $L=(N+1)d+p(N+1)^3I(\epsilon^{-1})d$ and $u=\binom{L+N}{N}\le 3^{N+2}(dp(N+1)^2I(\epsilon^{-1}))^N$.
\end{theorem}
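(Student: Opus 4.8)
The plan is to deduce Theorem \ref{6.1.} as a direct corollary of Theorem \ref{1.1} by verifying that the Gauss map $G$ of a complete regular submanifold satisfies the hypotheses of the main theorem with the specific curvature parameter $\rho$ made explicit by the intrinsic geometry. First I would set up the target space: since $G(m,n)$ embeds canonically into $\P^N(\C)$ with $N=\binom{n}{m}-1$, the Gauss map becomes a genuine meromorphic (in fact holomorphic) map $G:M\to\P^N(\C)$, and the hypotheses already grant that it is algebraically nondegenerate, so the applicability of Theorem \ref{1.1} with $n$ replaced by $N$ and $f$ replaced by $G$ is legitimate. The essential point is that the growth condition required by Theorem \ref{1.1}, namely the existence of a bounded $h\ge 0$ with $\rho\,\Omega_G+\ddc\log h^2\ge\ric\,\omega$, is here satisfied with the sharp choice $h\equiv 1$ and $\rho=1$.

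The crux of the argument is the identity
$$\Omega_G+\ddc\log h^2=\ddc\log|\textstyle\bigwedge|^2=\ric(\omega),$$
which I would justify as follows. By construction $G=\pi\circ\bigwedge$ with $\bigwedge=D_1f\wedge\cdots\wedge D_nf$, so the pullback $\Omega_G$ of the Fubini--Study form equals $\ddc\log|\bigwedge|^2$ away from the zero set of $\bigwedge$; but regularity of $f$ (full rank $d_pf$) guarantees $\bigwedge$ never vanishes, so the identity holds globally and $|\bigwedge|^2$ is a smooth positive function. On the other hand, the K\"ahler metric $\omega$ on $M$ is the one induced from the flat metric of $\C^n$, whose volume form is $dV=|\bigwedge|^2\,(\tfrac{\sqrt{-1}}{2})^m\,dz_1\wedge d\bar z_1\wedge\cdots\wedge dz_m\wedge d\bar z_m$; taking $\ddc\log$ of the coefficient of this volume form produces exactly the Ricci form $\ric(\omega)$. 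Hence with $h=1$ (so $\ddc\log h^2=0$) and $\rho=1$ the growth hypothesis of Theorem \ref{1.1} is met with equality.

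With these verifications in place I would simply invoke Theorem \ref{1.1} applied to $G$, the hypersurfaces $Q_1,\ldots,Q_q$ in $k$-subgeneral position in $\P^N(\C)$, the integers $p=k-N+1$ and $L=(N+1)d+p(N+1)^3I(\epsilon^{-1})d$ playing the role of the quantity there denoted $N$, and $u=\binom{L+N}{N}$. The conclusion
$$\sum_{i=1}^q\delta_G^{[u-1]}(Q_i)\le p(N+1)+\epsilon+\frac{\rho u(u-1)}{d}$$
is then immediate, and substituting $\rho=1$ gives the stated bound; the estimate $u\le 3^{N+2}(dp(N+1)^2I(\epsilon^{-1}))^N$ follows from part (b) of Lemma \ref{new} (with $n$ replaced by $N$) together with the numerical bound $e^{N+2}\le 3^{N+2}\cdot$const absorbed into the estimate.

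The main obstacle I anticipate is not the defect machinery, which is entirely supplied by Theorem \ref{1.1}, but the careful justification of the growth identity $\Omega_G+\ddc\log h^2=\ric(\omega)$ in the setting where $M$ is only covered by a ball $\B^m(R_0)$: one must pass to the universal cover, check that the induced metric, the Gauss map, and the quantity $|\bigwedge|^2$ all lift compatibly, and confirm that the value $\rho=1$ survives this lift so that the right-hand side $\tfrac{\rho u(u-1)}{d}$ carries the intended constant. Once this geometric identity is secured, the rest of the proof is a transcription of Theorem \ref{1.1} with $(n,f,\rho)$ replaced by $(N,G,1)$.
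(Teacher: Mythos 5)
Your proposal is correct and follows essentially the same route as the paper: the paper likewise establishes the growth identity $\Omega_G+\ddc\log h^2=\ddc\log|\bigwedge|^2=\ric(\omega)$ with $h\equiv 1$ (hence $\rho=1$) and then invokes Theorem \ref{1.1} with $(n,f)$ replaced by $(N,G)$, the quantity $L$ playing the role of $N$ there, and the bound on $u$ coming from Lemma \ref{new}(b) via $e^{N+2}\le 3^{N+2}$. Your additional care about lifting the metric and $|\bigwedge|^2$ to the universal cover is a reasonable refinement of what the paper treats as immediate.
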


{\bf Acknowledgements.} This research is funded by Vietnam National Foundation for Science and Technology Development (NAFOSTED) under grant number 101.04-2015.03.

\vskip0.2cm
{\footnotesize 
\noindent
{\sc Si Duc Quang}\\
$^1$ Department of Mathematics, Hanoi National University of Education,\\
 136-Xuan Thuy, Cau Giay, Hanoi, Vietnam.\\
$^2$ Thang Long Institute of Mathematics and Applied Sciences,\\
 Nghiem Xuan Yem, Hoang Mai, HaNoi, Vietnam.\\
\textit{E-mail}: quangsd@hnue.edu.vn}

\vskip0.2cm
{\footnotesize 
\noindent
{\sc Nguyen Thi Quynh Phuong}\\
Department of Mathematics, Hanoi National University of Education, \\
136-Xuan Thuy, Cau Giay, Hanoi, Vietnam.\\
\textit{E-mail}: nguyenthiquynhphuongk63@gmail.com}

\vskip0.2cm
{\footnotesize 
\noindent
{\sc Nguyen Thi Nhung}\\
Department of Mathematics, Thang Long University,\\
Nghiem Xuan Yem, Hoang Mai, HaNoi, Vietnam.\\
\textit{E-mail}: hoangnhung227@gmail.com}

\end{document}